\def\arxivVersion{1}
\newtheorem{theorem}{Theorem}
\newtheorem{lemma}[theorem]{Lemma}
\newtheorem{proposition}[theorem]{Proposition}
\newtheorem{definition}[theorem]{Definition}
\newtheorem{assumption}[theorem]{Assumption}
\newcommand{\R}{\ensuremath{\mathbb{R}}}
\newcommand{\N}{\ensuremath{\mathbb{N}}}
\DeclareMathOperator{\argmax}{argmax}
\DeclareMathOperator{\MAS}{MAS}
\newcommand{\Q}[1]{\min_{y^s \in \mathcal{Y}^s(#1)} g^s(#1, y^s)}
\newcommand{\CCG}{C\&CG}
\newcommand{\OPT}{\ensuremath{\textnormal{OPT}}}
\newcommand{\val}{\textnormal{VAL}}
\newcommand{\UB}{\textnormal{UB}}
\newcommand{\LB}{\textnormal{LB}}
\newcommand{\RO}{\textnormal{2-RO}}
\newcommand{\FBS}{\texttt{FindBadScenario}}
\newcommand{\Init}{\texttt{InitSubset}}
\newcommand{\rev}[1]{\textcolor{black}{#1}}
\newcommand{\revv}[1]{\textcolor{black}{#1}}
\newcounter{subeqn} \renewcommand{\thesubeqn}{\theequation\alph{subeqn}}%
\newcommand{\subeqn}{%
    \refstepcounter{subeqn}
    \tag{\thesubeqn}
}
\begin{document}


 \title{A fast approximate column-and-constraint generation method for two-stage robust mixed-integer programs}

\author[1]{Marc Goerigk}
\author[1]{Dorothee Henke}
\author[2]{Johannes Kager\footnote{Corresponding author. Email: johannes.kager@tum.de}}
\author[3]{Fabian Schäfer}
\author[2,4]{Clemens Thielen}

\date{}

\affil[1]{Business Decisions and Data Science, University of Passau, Germany}
\affil[2]{Professorship of Optimization and Sustainable Decision Making, Campus Straubing for Biotechnology and Sustainability, Technical~University~of~Munich, Germany}
\affil[3]{Chair of Supply and Value Chain Management, Campus Straubing for Biotechnology and Sustainability, Technical~University~of~Munich, Germany}
\affil[4]{Department of Mathematics, School of Computation, Information and Technology, Technical University of Munich, Germany}

\maketitle

\begin{abstract}
This paper presents a new column-and-constraint generation method for two-stage robust mixed-integer programs with finite uncertainty sets. Our method combines and extends speed-up techniques used in previous column-and-constraint generation methods and introduces several new techniques. In particular, it uses dual bounds for second-stage problems in order to allow a faster identification of the next promising scenario to be added to the master problem.
Moreover, adaptive time limits are imposed to avoid getting stuck on particularly hard second-stage problems, and a gap propagation between master problem and second-stage problems is used to stop solving them earlier if only a given non-zero optimality gap is to be reached overall. 
This makes our method particularly effective for problems where solving the second-stage problem is computationally challenging.
To evaluate the method's performance, we compare it to two recent column-and-constraint generation methods from the
literature on two applications: a robust capacitated location routing problem and a robust integrated berth allocation and quay crane assignment and scheduling problem. The first problem features a particularly hard second stage, and we show that our method is able to solve considerably more and larger instances in a given time limit.
Using the second problem, we verify the general applicability of our method, even for problems where the second stage is relatively easy.
\end{abstract}

\noindent\textsf{\textbf{Keywords:}}
Robust optimization; Column-and-constraint generation; Finite uncertainty set; Two-stage robust optimization

\section{Introduction}\label{sec:intro}

Robust optimization \cite{ben2009robust} aims to find solutions of optimization problems that are uncertain with respect to realizations of some problem parameters within a given uncertainty set.
In \emph{two-stage robust optimization problems}, some decisions \emph{(here-and-now decisions)} have to be made before information about the uncertain parameters is revealed, but other decisions \emph{(wait-and-see decisions)} can be postponed until some or even all information is available \cite{YANIKOGLU2019799}. In this work, we consider two-stage robust mixed-integer programs with uncertainty sets consisting of finitely many scenarios as studied, e.g., in~\cite{bib29bienstock2008computing}, \cite{toenissen11}, \cite{RODRIGUES2021499} and \cite{bib18chargui2023berth}.

Let us denote this finite set of scenarios by~$S$.
The here-and-now decisions are made in the first stage of the optimization and are represented by a vector~$x$ of first-stage variables.
The wait-and-see decisions are made in the second stage of the problem
and are represented by vectors~$y^s$ of second-stage variables for all scenarios~$s\in S$, which are summarized in the vector~$y=(y^s)_{s\in S}$ in an arbitrary order.
We let~$\mathcal X$ denote the feasible set of the first-stage variables~$x$ and, for each~$x \in \mathcal X$ and~$s \in S$, let~$\mathcal{Y}^s(x)$ denote the feasible set of the second-stage variables~$y^s$. We assume that these sets can be expressed as feasible sets of mixed-integer programs~(MIPs), i.e., using finitely many linear equality and inequality constraints and integrality constraints on some or all of the variables. Further, $f(x)$ describes a non-negative affine objective function of the first stage, and, for each~$s \in S$, $g^s(x, y^s)$ describes a non-negative affine objective function (in~$x$ and~$y^s$) of the second stage.

Using this notation, the two-stage robust mixed-integer programs we consider can be written as follows:
\begin{align}\label{eq:mod}\tag{\RO}
	\min_{x \in \mathcal{X}} \left( f(x) + \max_{s \in S} \min_{y^s\in\mathcal{Y}^s(x)} g^s(x, y^s) \right)
\end{align}

For a fixed first-stage solution~$x \in \mathcal{X}$ and a scenario $s \in S$, we call the inner minimization problem the \emph{second-stage problem} and refer to its value $Q(x, s) \coloneqq \Q{x}$ as the \emph{second-stage cost of~$x$ in scenario~$s$}, or simply as the \emph{(second-stage) cost of scenario~$s$} if~$x$ is clear from the context. With slight abuse of notation, we also refer to $Q(x, s)$ as the second-stage problem.
Upper and lower bounds on the second-stage cost of~$x$ in scenario~$s$ are denoted by~$\UB^s(x) \in \R_{\ge 0} \cup \{+\infty\}$ and~$\LB^s(x) \in \R_{\ge 0}$, respectively. Here, we shortly write $\UB^s\coloneqq \UB^s(x)$ and $\LB^s\coloneqq \LB^s(x)$ when the first-stage solution~$x$ is clear from the context, and we set $\UB^s(x)\coloneqq +\infty$
if the second-stage problem~$Q(x,s)$ is infeasible. The cost or the upper bound of a scenario is \emph{worse} than the one of another scenario when it is strictly larger.
Given a first-stage solution~$x\in\mathcal{X}$ and a subset~$R\subseteq S$ of the scenarios, we say that a scenario~$s \in R$ is \emph{worst for~$x$ in~$R$} if $s \in \argmax_{k\in R} Q(x,k)$, and \emph{worst for~$x$ in~$R$ with respect to~$\UB$} if $s \in \argmax_{k\in R} \UB^k(x)$. A \emph{worst scenario for~$x$ (with respect to~$\UB$)} is one that is worst for~$x$ (with respect to~$\UB$) in the whole set~$S$ of scenarios.

\medskip

Since there is a finite number of scenarios, we can transform the min-max-min-problem~\eqref{eq:mod} into an equivalent single-stage minimization problem. Below, we perform this reformulation in the more general case where~$S$ is replaced by any subset~$D\subseteq S$ of scenarios. For~$D\subseteq S$, the resulting MIP is called the \emph{master problem}~$\MAS_D$:
\begin{equation*} 
\begin{aligned}
    \quad \min \quad&f(x) + z \\
    \mathrm{s.t.} \quad&z \ge g^s(x, y^s) &&
    \forall s \in D\\
    &z \in \R_{\ge 0}, \;\;x \in \mathcal{X},\;\; y^s\in\mathcal{Y}^s(x) && \forall s \in D
\end{aligned}
\end{equation*}

In~$\MAS_D$, the new auxiliary variable~$z$ upper-bounds the second-stage objective for all scenarios in~$D$, which means that minimizing~$z$ is equivalent to minimizing the maximum second-stage cost over all scenarios in~$D$. In particular, for~$D=S$, the resulting mixed-integer program~$\MAS_S$ is equivalent to~\eqref{eq:mod}.
A feasible solution of~$\MAS_D$ is a triple~$(x,y,z)$.
However, when speaking about a~\emph{master solution}, we mean only the pair~$(x, z)$ since these are the variables that are fixed when focusing on the second-stage after solving~$\MAS_D$.

The consideration of subsets~$D\subseteq S$ of scenarios in~$\MAS_D$ is motivated by the observation that solving the
master problem for the whole set~$S$ of scenarios is often computationally intractable for problems of a practically relevant size.
This is especially the case when the second-stage problems are hard to solve, or when the number of scenarios is large. Thus, based on the fact that only a subset of scenarios might actually be relevant for finding an optimal first-stage solution, the idea of~\rev{\emph{column-and-constraint generation (\CCG{}) methods}} is to start with a small number of scenarios and iteratively add a worst scenario for the current first-stage solution \cite{zeng}.
\rev{The procedure terminates when a \revv{termination condition} certifies that adding the remaining scenarios in~$S \setminus D$ cannot deteriorate the current solution's objective value by more than a desired tolerance.}
Thus, the general structure of a column-and-constraint generation method can be broadly summarized as follows:
\begin{enumerate}
    \item Let $D \subseteq S$ be a small set of scenarios (usually one scenario only).
    \item Solve~$\MAS_D$ and denote the master solution by~$(\tilde x, \tilde z)$.\label{it:ccgstep2}
    \item Find a worst scenario~$s \in S$ for the given first-stage solution~$\tilde x$.\label{it:ccgstep3}
    \item \rev{Unless a} \revv{termination condition} \rev{is met, add~$s$ to~$D$ and go to Step~\ref{it:ccgstep2}.}
\end{enumerate}

The effectiveness of this method highly depends on the effort of finding a worst scenario for the current first-stage solution (Step~\ref{it:ccgstep3}).
We call this problem of finding a worst scenario for the current first-stage solution in each iteration the \emph{subproblem} (of finding a worst scenario).
A central possibility for improving the method is to apply heuristics in the process of finding a worst scenario, which removes the necessity to solve the second-stage problem of each scenario to optimality.
\rev{In fact, since the number of scenarios is finite, termination of a \CCG{} method is guaranteed whenever solving the subproblem always leads to either the \revv{termination condition} being met or a new scenario~$s \in S \setminus D$ being returned.}

\section{Related literature and our contribution}

Two-stage robust programming problems were first
\rev{introduced} in~\cite{bib27ben2004adjustable}. In comparison to conventional robust programs, the authors allow the values of certain variables (the second-stage variables) to be determined after the realization of some uncertain parameters. In contrast, the values of the remaining variables (the first-stage variables) must be determined before the uncertain parameters are realized and cannot be changed afterwards. \rev{A similar concept is the one of recoverable robust optimization. Here, a solution to the problem is fully described by the first-stage variables, and it can be modified (with penalty costs) in the second stage \cite{bib4liebchen2009concept,bib6goerigk2014recovery}.}

\rev{It is important to note that, unlike in single-stage min–max problems, where a worst-case scenario in a convex uncertainty set lies at an extreme point of the set, the situation is different in two-stage problems. Here, a discrete scenario set cannot simply be replaced by its convex hull, and polyhedral reformulation techniques are not applicable (see also Section~4.4.2 in \cite{goerigk2024buch}). This further emphasizes the importance of specialized algorithms for finite uncertainty sets}%
\revv{, and several techniques have been developed in the literature for two-stage robust optimization problems in this setting.} As discussed in the introduction, one approach is to reformulate them as single-stage problems. However\revv{, especially for two-stage robust mixed-integer programs,} the resulting single-stage problems quickly become intractable, motivating the development of decomposition algorithms such as \CCG{} methods.

\CCG{} methods can be seen as an \emph{adversarial} approach \cite{luceraiffabook}, i.e., two players are involved where one player proposes values for the first-stage variables using only a subset of scenarios, denoted by~$D$ in~$\MAS_D$, and the other player answers by delivering an additional scenario for which the proposed values of the first-stage variables are not optimal \revv{(up to a desired tolerance)}. If no such additional scenario is found, the algorithm can terminate.

One of the first versions of a \CCG{} method was published in 2008 in~\cite{bib29bienstock2008computing} under the name of \emph{approximate adversarial algorithm}. The authors use it for computing robust basestock levels. The method also appears in the survey \cite{aissi2009min} in the context of min-max regret problems. \rev{The name \emph{column-and-constraint generation method} was first coined in \cite{zeng}, where it was formalized as a general method for two-stage problems. The authors apply the algorithm to a two-stage robust location-transportation problem. Another name for \CCG{} methods that appears in the literature is \emph{scenario addition methods}~\cite{toenissen11,toenissen12}.}

\medskip

The main advantage of \CCG{} methods is that they reduce the complexity with respect to the number of scenarios. In many applications, it can be observed that, when iteratively identifying a bad or even worst scenario for the current first-stage solution, only a handful of scenarios need to be added to the master problem to solve the problem to optimality, even when the number of scenarios is in the hundreds. While this allows \CCG{} methods to reduce the size of the considered master problems significantly, solving the second-stage problem for every scenario in each iteration to find a worst scenario can be quite time-consuming in cases where the second-stage problem is hard. Therefore, several methods have been developed that aim at identifying a bad or worst scenario for the current first-stage solution without solving the second-stage problem optimally for every scenario.

The \emph{improved scenario addition method} (ISAM) proposed in~\cite{toenissen11} aims at accelerating the search for a worst scenario for the current first-stage solution by first applying a heuristic to the second-stage problem of each scenario. The second-stage problems are then solved to optimality in order of non-increasing heuristic objective values. This procedure may find a worst scenario before solving all second-stage problems to optimality if the optimal objective value of a scenario's second-stage problem is found to be
larger than or equal to the (heuristic) second-stage costs of all other scenarios. To evaluate their algorithm, the authors of~\cite{toenissen11} use a recoverable robust maintenance location routing problem for rolling stock. Since the ISAM from~\cite{toenissen11} is well-suited for applications with hard second-stage problems, we use it to compare our new \CCG{} method to and provide a more formal description of the algorithm in Section~\ref{sec:alg-previous}.

Another recent method aimed at quickly finding a bad scenario for the current first-stage solution is the \emph{scenario reduction procedure} (SRP) proposed in~\cite{RODRIGUES2021499}. The main new idea is that a scenario does not have to be added to~$D$ if the best upper bound for its second-stage cost is smaller than or equal to the value~$\tilde z$ obtained from the current master problem, and the algorithm can terminate if this is the case for all scenarios. Therefore, the proposed algorithm goes through the list of scenarios in order to determine if any of them have a second-stage cost strictly larger than~$\tilde z$. This is done by first applying a fast heuristic to the second-stage problem of a scenario and only solving the corresponding second-stage problem optimally afterwards if the heuristic value is strictly larger than~$\tilde z$. If the resulting optimal second-stage cost is still strictly larger than~$\tilde z$, the corresponding scenario is directly added to~$D$ without considering any further scenarios.
Thus, the algorithm does not necessarily find a worst scenario for the current first-stage solution, but instead returns the first considered scenario for which the second-stage cost of the current first-stage solution is larger than~$\tilde z$, i.e., larger than the current first-stage solution's second-stage cost in all scenarios already contained in~$D$. The algorithm is applied to an integrated berth allocation and quay crane assignment and scheduling problem (BACASP) where arrival times are uncertain. We use it  to compare our new \CCG{} method and provide a more detailed description in Section~\ref{sec:alg-previous}.

A slightly different version of the SRP is used in~\cite{bib18chargui2023berth} for a BACASP with renewable energy uncertainty. In the subproblem, all second-stage problems of scenarios whose heuristic value is larger than~$\tilde z$ are solved to optimality in order to find a worst scenario. The used heuristic is capable of finding a feasible solution to both the master and the second-stage problems and is based on a variable and iterated local neighborhood search algorithm.

Recently, authors tried to combine two decomposition methods for two-stage robust problems \cite{bib16li2024decomposition}. The algorithm alternates between adding cuts obtained from a Benders decomposition and from a column-and-constraint generation method to the master problem.

The authors of~\cite{bib17wang2024robust} propose a so-called scenario-constrained \CCG{} method. The worst scenarios that are identified in each iteration are added to a list, but only the most recently identified scenario is used for the master problem. If a scenario re-occurs as a worst scenario, the algorithm can be terminated.

The case where the hardness of the second-stage problem is not the limiting factor but the master problem is hard to solve even for a small number of scenarios is considered in~\cite{TSANG202392}. After solving the master problem up to a certain gap, they solve the second-stage problem optimally for every scenario. Afterwards, they only add a worst scenario if this yields a sufficiently large increase in the second-stage cost of the current first-stage solution. Otherwise, they continue to solve the master problem to a smaller gap first before solving the second-stage problems again for the improved first-stage solution.

Another, more sophisticated method is described in~\cite{hashemi}. It augments a Benders decomposition algorithm with a cut-generating heuristic that results from a reformulation of the two-stage robust problem into a single-stage minimization problem. This is done by pulling out the inner minimization problem, replacing the finite uncertainty set by its convex hull, and dualizing the remaining maximization part. The authors demonstrate the effectiveness of the algorithm on a two-stage nurse planning and a two-echelon supply chain problem.

\medskip

Finally, we also note that, most recently, machine learning techniques have been applied to speed up the solution process for two-stage robust optimization problems, see \cite{bib13bertsimas2024machine} and \cite{
goerigk2024data}.

\paragraph{\textbf{Our contribution}}
In this work, we propose a new \CCG{} method and evaluate its performance on two applications.

The most important improvements used in our algorithm can be summarized as follows (see Section~\ref{sec:alg} for details). First, we consider lower (dual) bounds for the second-stage problems while solving the subproblem. After applying a quick heuristic, we pursue a top-to-bottom strategy similar to~\cite{toenissen11}, but
\rev{we always solve a second-stage problem attaining the largest current upper bound and switch to another problem whenever the ordering changes.} Moreover, we exclude a scenario from further consideration as soon as the current upper bound for its second-stage problem becomes less than or equal to the lower bound of another scenario's second-stage problem.
Second, we use an adaptive time limit for each scenario's second-stage problem when
\rev{searching for a worst scenario}. This avoids getting stuck on a particularly hard second-stage problem and can therefore reduce the overall runtime significantly as shown in our computational experiments. The time limit is adaptive in the sense that it depends linearly on the time spent for solving the master problem in the current iteration.
Third, we allow passing a non-zero, user-defined \emph{target gap} up to which~\eqref{eq:mod} is to be solved to our \CCG{} method. \rev{The gap used when solving the master problem in each iteration is set relative to this target gap, and we are able to increase the upper bound~$\tilde z$ on the worst second-stage cost of the current master solution in cases where this master solution has a smaller gap than the target gap}.

We prove the correctness and termination of our method, which we call \emph{approximate scenario bracketing procedure} (ASBP), and compare it to the state-of-the-art \CCG{} methods from~\cite{toenissen11} (\emph{improved scenario addition method} (ISAM)) and~\cite{RODRIGUES2021499} (\emph{scenario reduction procedure} (SRP)), which are described in Section~\ref{sec:alg-previous}. \rev{A feature comparison of our method and these methods is provided in Table~\ref{tab:algorithm-comparison}}. We compare the three methods on two applications described in Section~\ref{sec:applications}.
The first application is a robust version of a capacitated location routing problem \cite{alvarez,toro} with hard second-stage problems (i.e., more time is spent on the second-stage problems than on the master problems overall). Here, our method outperforms both the ISAM and the SRP considerably even for a target gap of zero, and the margin of improvement is even larger when a non-zero target gap is used. The second application is a robust integrated berth allocation and quay crane assignment and scheduling problem \cite{AGRA2018138,RODRIGUES2021499} with comparatively easy second-stage problems (i.e., less time is spent on the second-stage problems than on the master problems overall). Even though our method is mostly targeted at problems with hard second-stage problems, the computational results show that it still outperforms the ISAM by a notable margin for this problem, and it \revv{even shows a better performance than the SRP which has been specifically designed for this application}.

Moreover, we perform an analysis of the impact of the different speed-up techniques employed in our method by comparing it to several variants in which one of these techniques is disabled in each case.

\begin{table}[H]
  \footnotesize
  \centering
  \begin{tabular}{lccc}
    \toprule
    \textbf{Feature} & \textbf{ASBP} & \textbf{ISAM} & \textbf{SRP} \\
    & (this work) &
    &
    \\
    \midrule
    Lower (dual) bounds for second-stage problems & \checkmark & \texttimes & \texttimes \\ \addlinespace
    Adaptive time limits for second-stage problems & \checkmark & \texttimes & \texttimes \\ \addlinespace
    Gap propagation between master and & \checkmark & \texttimes & \texttimes \\
    second-stage problems & & & \\ \addlinespace
    %
    \revv{Always solves second-stage problem with}
                     & \checkmark & \texttimes & \texttimes \\
    \revv{largest current upper bound} & & & \\  \addlinespace
    Considers $\tilde z$-bound from master problem in & \checkmark & \texttimes & \checkmark \\
    subproblem & & & \\ \addlinespace
    Subproblem returns a worst scenario & (\checkmark)\tablefootnote{
                                          Decided adaptively based on time limit for second-stage problems.
                                          } & \checkmark & \texttimes \\ \addlinespace
    \bottomrule
  \end{tabular}
  \caption{Comparison of three \CCG{} methods: our ASBP, the ISAM from \cite{toenissen11}, and the SRP from \cite{RODRIGUES2021499}.}
  \label{tab:algorithm-comparison}
\end{table}

\section{\rev{General structure and known \CCG{} methods}}\label{sec:alg-previous}

This section formalizes the general structure of a \CCG{} method as well as the specific state-of-the-art \CCG{} methods from \cite{toenissen11} and \cite{RODRIGUES2021499} that we compare our new method to.

\medskip

The general structure of a \CCG{} method for a mixed-integer program (MIP) whose objective is to be minimized is shown in Algorithm~\ref{alg:general}. To obtain a complete description of a specific column-and-constraint generation method, the implementations of the two subroutines \Init() and \FBS()
need to be specified, and several variants for this will be discussed below. Moreover, Algorithm~\ref{alg:general} allows to input a target gap~$P\in [0,1)$ up to which~\eqref{eq:mod} is to be solved, \rev{and a \emph{master-gap factor}~$\mu$ used to determine to which gap $\MAS_D$ is to be solved}. These will be discussed in detail in Section~\ref{sec:alg-gaps}.

\begin{algorithm}
	\caption{General structure of a \CCG{} method}\label{alg:general}
	\begin{algorithmic}[1]
 \Statex \textbf{Parameter:} Target gap $P \in [0,1)$, \rev{master-gap factor $\mu \in [0,1]$}
		\Statex \textbf{Input:} Scenario set~$S$
		\Statex \textbf{Output:} A first-stage solution of~\eqref{eq:mod} with gap at most~$P$ \rev{or the information that the problem is infeasible}
		\Statex \textbf{Initialization:} $D \gets $ \Init().
            \State $\text{Terminate} \gets \text{False}$.
		\While{$\text{Terminate} = \text{False}$}\label{alg:general-while-loop}
                \State \rev{Try to solve~$\MAS_D$ with gap~$\mu P$.} \label{algline:1step1}
                \rev{
                \If{$\MAS_D$ is infeasible}
			     \State \Return Problem infeasible.
                 \Else
                 \State Let $(\tilde{x}, \tilde{z})$ denote the obtained master solution.
                \EndIf
                }
                \State $s \gets \FBS(D, \tilde{x}, \tilde{z})$.   \label{algline:general:subroutine}
		      \If{$s \in D$} \label{algline:generalterminationcrit}
			     \State $\text{Terminate} \gets \text{True}$.
		      \Else
			     \State Add~$s$ to~$D$.
                \EndIf
		\EndWhile \label{alg:general-while-loop-end}
            \State \Return $\tilde x$.
	\end{algorithmic}
\end{algorithm}

\medskip

The implementation of \Init() can be chosen in different ways in each of the considered \CCG{} methods. Possible options include:
\begin{enumerate}
    \item[1)] $\Init()=\emptyset$. In the first iteration of Lines~\ref{alg:general-while-loop}--\ref{alg:general-while-loop-end} in Algorithm~\ref{alg:general}, the master problem is then~$\MAS_\emptyset$ and the variable~$z$ is only bounded from below by zero. Therefore, only~$f(x)$ is minimized when solving~$\MAS_{\emptyset}$ in Line~\ref{algline:1step1} of the first iteration, and \rev{if~$\MAS_{\emptyset}$ is feasible,} the first scenario added to~$D$ will be the one found by \FBS() in Line~\ref{algline:general:subroutine}.
    \item[2)] $\Init()=\{s\}$ for a scenario~$s\in S$.
    \item[3)] $\Init()=M$, where~$M$ denotes a set of two or more scenarios.
\end{enumerate}

\rev{Note that, in the second (third) option, the set can be initialized with one (several) randomly chosen scenario(s).
Alternatively, scenarios that are expected to yield a high second-stage cost, identified using some problem-specific procedure, are often used.}

In the following description of the \CCG{} methods from \cite{toenissen11} and \cite{RODRIGUES2021499}, we state the implementation used in the corresponding paper for each method.
\rev{However, all of the three presented options would work in each of the \CCG{} methods. In fact, }in our computational experiments in Section~\ref{sec:computational-exps}, we choose the implementation of \Init() uniformly across the different compared methods for each considered application in order to obtain a fair comparison.

Further, we note that the \CCG{} methods from \cite{toenissen11} and \cite{RODRIGUES2021499} do not use any target gap in their original version, which corresponds to a target gap of~$P=0$ in Algorithm~\ref{alg:general}. Thus, they always solve the problem optimally. In our computational comparisons in Section~\ref{sec:computational-exps}, however, we apply all algorithms with target gap zero as well as with non-zero target gaps. Therefore, we provide a short proof of correctness for both algorithms in the case of a non-zero target gap in %
\if\arxivVersion0%
the supplementary material (Section~A).
\else%
\ref{sec:appendix-1-proofs}.
\fi

\medskip

The \emph{improved scenario addition method} (ISAM) presented in~\cite{toenissen11} uses $\Init()=\{s\}$ for a random scenario~$s\in S$, and its implementation of \FBS() is shown in Algorithm~\ref{alg:toenissen}.
The main idea of this implementation of \FBS() is to first apply a fast heuristic for the second-stage problem of each scenario in order to obtain upper bounds~$\UB^s$ for all~$s \in S$ quickly. Afterwards, it iteratively solves the second-stage problem for a scenario~$s$ with maximum upper bound~$\UB^s$ to optimality and updates the corresponding upper bound to the obtained optimal objective value. The subroutine stops and returns the scenario~$k$ corresponding to the last optimally-solved second-stage problem if this problem's optimal objective value is larger or equal to all current upper bounds~$\UB^s$, in which case~$k$ must be a worst scenario for the first-stage solution~$\tilde{x}$ that was provided as an input to \FBS().

 \begin{algorithm}
    \caption{Implementation of $\FBS()$ in the ISAM \cite{toenissen11}}\label{alg:toenissen}
 	\begin{algorithmic}[1]
 		\Statex \textbf{Input:} Scenario set~$D \subseteq S$ and master solution~$(\tilde x, \tilde z)$ of~$\MAS_D$
 		\Statex \textbf{Output:} A scenario~$s \in S$
 		\Statex \textbf{Initialization:} $\UB^s \gets +\infty \;\forall s\in S$.
 		\For{$s \in S$}
 		\State \parbox[t]{\dimexpr\linewidth-\algorithmicindent}{
 		Run a heuristic for~$Q(\tilde x, s)$. If the heuristic finds a feasible solution, set~$\UB^s$ to the corresponding second-stage cost.
 		}
 		\EndFor
 		\State Let $k \in \argmax_{s\in S} \UB^s$. \label{algline:2step5}
 		\State Optimally solve $Q(\tilde x, k)$ and update~$\UB^k$.
 		\If{$k \notin \argmax_{s \in S} \UB^s$}
 		\State Go to Line~\ref{algline:2step5}.
 		\EndIf
 		\State \Return $k$. \label{algline:isamreturn}
 	\end{algorithmic}
 \end{algorithm}

\rev{The \emph{scenario reduction procedure} (SRP) from \cite{RODRIGUES2021499} uses \linebreak$\Init()=\{s\}$ with~$s\in S$,\footnote{Note that, in the experiments conducted in~\cite{RODRIGUES2021499}, $s$ either denotes the nominal scenario or a scenario that is expected to yield a high second-stage cost and is identified using a problem-specific procedure. In the latter case, the authors of~\cite{RODRIGUES2021499} speak of a ``warm start'' and correspondingly denote the algorithm as \emph{SRP+WS}.} and its implementation of \FBS() is shown in Algorithm~\ref{alg:rodriguez}.}
Here, the main new idea is that adding a scenario~$s\in S\setminus D$ to~$D$ only increases the objective value $f(\tilde{x})+\tilde{z}$ of the provided master solution~$(\tilde{x},\tilde{z})$ in~$\MAS_D$ if the optimal objective value of the second-stage problem~$Q(\tilde{x},s)$ is larger than~$\tilde{z}$. Therefore, Algorithm~\ref{alg:rodriguez} stops the heuristic for a scenario's second-stage problem when the objective value becomes less than or equal to~$\tilde{z}$. Moreover, the algorithm does not necessarily apply the heuristic for all scenarios. Instead, it only applies it for the scenarios in~$S\setminus D$ and, whenever the heuristic's objective value for a scenario's second-stage problem is larger than~$\tilde{z}$, this second-stage problem is immediately solved to optimality using an MIP solver. If the obtained optimal objective value still exceeds~$\tilde{z}$, Algorithm~\ref{alg:rodriguez} immediately returns the corresponding scenario without considering any of the remaining scenarios. In case no scenario in~$S\setminus D$ has an optimal second-stage objective value larger than~$\tilde z$, the overall algorithm (Algorithm~\ref{alg:general}) can be terminated since a worst scenario for the current first-stage solution~$\tilde{x}$ is already contained in~$D$. In Algorithm~\ref{alg:rodriguez}, this is modeled by returning an arbitrary scenario~$s \in D$, as this leads to termination of Algorithm~\ref{alg:general}.
Overall, if Algorithm~\ref{alg:rodriguez} returns a scenario in~$S\setminus D$, this scenario is necessarily worse for the current first-stage solution~$\tilde{x}$ than all scenarios previously contained in~$D$, but it is not necessarily a worst scenario for~$\tilde{x}$.

 \begin{algorithm}
    \caption{Implementation of $\FBS()$ in the SRP \cite{RODRIGUES2021499}}\label{alg:rodriguez}
	\begin{algorithmic}[1]
		\Statex \textbf{Input:} Scenario set~$D \subseteq S$ and master solution~$(\tilde x, \tilde z)$ of~$\MAS_D$
		\Statex \textbf{Output:} A scenario~$s \in S$
		\Statex \textbf{Initialization:} $\UB^s \gets +\infty \;\forall s\in S$.
		\For{$s \in S \setminus D$}
		\State \parbox[t]{\dimexpr\linewidth-\algorithmicindent}{
		Run a heuristic for~$Q(\tilde x, s)$. Stop the heuristic when the objective value becomes~$\leq \tilde z$. If the heuristic finds a feasible solution, set~$\UB^s$ to the corresponding second-stage cost.
		} \label{ouralgline:heuristic}
		\If{$\UB^s > \tilde z$}
			\State Optimally solve $Q(\tilde x, s)$ and update~$\UB^s$.
			\If{$\UB^s > \tilde z$}
				\State \Return $s$.
			\EndIf
		\EndIf
		\EndFor
		\State \Return any $s \in D$. \label{algline:srpreturn}
	\end{algorithmic}
\end{algorithm}

\section{The approximate scenario bracketing procedure} \label{sec:alg}

In Section~\ref{sec:alg-gaps}, we first prove two results that form the basis for exploiting a non-zero target gap for~\eqref{eq:mod} in our new approximate \CCG{} method\rev{, called the \emph{approximate scenario bracketing procedure} (ASBP). We present our method in Section~\ref{sec:alg-ours}, and its correctness is shown in Section~\ref{sec:alg-ours-analysis}.}
\subsection{Gap propagation in column-and-constraint generation methods} \label{sec:alg-gaps}

We first give a precise definition of the gap in the context of optimization problems.

\begin{definition}\label{def:gap}
A feasible solution with objective value~$\val$ for a minimization problem with optimal objective value~$\OPT\ge 0$ has \emph{gap}~$p\in[0,1)$ if
\begin{align*}
    (1-p)\cdot\val \le \OPT \text{ or, equivalently, if } \val \le \frac{1}{1-p}\cdot\OPT.
\end{align*}
\end{definition}

Note that this definition, in particular, implies that a feasible solution is optimal if and only if it has gap zero.

Besides an upper (primal) bound~$\UB$ given by the objective value of the currently best solution, mixed-integer programming solvers  usually provide also a lower (dual) bound~$\LB$ on the optimal objective value~$\OPT$ of the considered problem at each point in the solution process. Using these two bounds, the \emph{current gap} is then defined as $p = \frac{\UB - \LB}{\UB}$ (see, e.g., \cite{MIP-gap-gurobi}). Note that this definition is equivalent to $\UB = \frac{1}{1-p}\cdot\LB$, and we have $\LB\le \OPT$. Therefore, if the current gap is~$p\in[0,1)$, we obtain
\begin{align}
    \UB = \frac{1}{1-p}\cdot\LB \le \frac{1}{1-p}\cdot\OPT, \label{eq:gap}
\end{align}
so the currently best solution with objective value~$\UB$ must have gap~$p$ according to Definition~\ref{def:gap} (even though~$\OPT$ is usually still unknown).

\medskip

We now make use of the gap in the context of \CCG{} methods
and consider the case where the two-stage problem~\eqref{eq:mod} is not to be solved to optimality, but only up to some given non-zero target gap.
Our results are related to the termination of \rev{a \CCG{} method}.
First, suppose that, for some~$D \subseteq S$, the current master problem~$\MAS_D$ is solved to optimality with master solution~$(\tilde x, \tilde z)$.
Then, $\tilde z$ is an upper bound on the optimal objective value of the second-stage problem for each scenario~$s\in D$.
If there is no scenario in~$S$ for which the second-stage objective value is larger than~$\tilde z$, we can \revv{terminate} the algorithm because the current master solution~$(\tilde x, \tilde z)$ remains optimal
even when adding all scenarios to~$D$,
i.e., it is optimal for~$\MAS_S$.
Since~$\MAS_S$ is equivalent to~\eqref{eq:mod}, this means that~$\tilde x$ is an optimal first-stage solution of~\eqref{eq:mod}.
Proposition~\ref{prop:gappropagation} below shows that, even when solving the master problem~$\MAS_D$ only up to a non-zero gap, we may terminate the algorithm under these conditions and guarantee that the same gap is also reached for the original problem~\eqref{eq:mod}.

In the following, given a subset~$D \subseteq S$ of scenarios, we let~$\OPT_D$ denote the optimal objective value of~$\MAS_D$. Note that, since~$\MAS_S$ is equivalent to~\eqref{eq:mod}, this means that $\OPT_S\eqqcolon \OPT_{\RO}$ equals the optimal objective value of~\eqref{eq:mod}.

\begin{proposition}\label{prop:gappropagation}
  Suppose that~$\MAS_D$ has been solved up to a gap~$P \in [0, 1)$ for some subset~$D\subseteq S$, and let~$(\tilde x, \tilde z)$ denote the obtained master solution. If $Q(\tilde x, s) \le \tilde z$ for all~$s \in S$, then the first-stage solution~$\tilde x$ is a solution of~\eqref{eq:mod} with gap~$P$.
\end{proposition}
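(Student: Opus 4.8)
The plan is to compare the objective value of the candidate master solution $(\tilde x, \tilde z)$ with the optimal value $\OPT_{\RO} = \OPT_S$ of the full problem, and to show that the ratio is controlled by $\frac{1}{1-P}$ using two ingredients: the gap guarantee on $\MAS_D$ and the hypothesis $Q(\tilde x, s) \le \tilde z$ for all $s \in S$. First I would observe that, since $\MAS_D$ has been solved up to gap $P$, Definition~\ref{def:gap} (together with the discussion around~\eqref{eq:gap}) gives $f(\tilde x) + \tilde z \le \frac{1}{1-P}\,\OPT_D$. Next, because $D \subseteq S$ means $\MAS_D$ is a relaxation of $\MAS_S$ (fewer constraints of the form $z \ge g^s(x,y^s)$ and, correspondingly, fewer second-stage variable blocks to keep feasible), we have the monotonicity $\OPT_D \le \OPT_S = \OPT_{\RO}$. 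Combining these two inequalities yields $f(\tilde x) + \tilde z \le \frac{1}{1-P}\,\OPT_{\RO}$.

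It then remains to argue that $\tilde x$, as a first-stage solution of~\ref{eq:mod}, actually attains an objective value that is at most $f(\tilde x) + \tilde z$. The objective of~\ref{eq:mod} at $\tilde x$ is $f(\tilde x) + \max_{s \in S} Q(\tilde x, s)$, and the hypothesis $Q(\tilde x, s) \le \tilde z$ for all $s \in S$ gives $\max_{s \in S} Q(\tilde x, s) \le \tilde z$, hence $f(\tilde x) + \max_{s \in S} Q(\tilde x, s) \le f(\tilde x) + \tilde z \le \frac{1}{1-P}\,\OPT_{\RO}$. By Definition~\ref{def:gap}, this is exactly the statement that $\tilde x$ is a solution of~\ref{eq:mod} with gap~$P$. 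One should also note in passing that $\tilde x$ is feasible for~\ref{eq:mod}: feasibility of the master solution guarantees $\tilde x \in \mathcal X$, and the hypothesis $Q(\tilde x, s) \le \tilde z < +\infty$ forces $\mathcal Y^s(\tilde x) \neq \emptyset$ for every $s \in S$, so the inner minimization is well-defined for all scenarios.

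I expect the only subtle point to be making the relaxation inequality $\OPT_D \le \OPT_{\RO}$ fully rigorous: one must check that an optimal solution $(x^*, y^*, z^*)$ of $\MAS_S$ restricts to a feasible solution of $\MAS_D$ with the same objective value $f(x^*) + z^*$ by simply discarding the blocks $y^{*s}$ for $s \notin D$ — all remaining constraints are inherited, and $z^* \ge 0$ still holds. Everything else is a short chain of inequalities, so no genuinely hard step is anticipated; the proof is essentially a two-line estimate once the relaxation observation is in place.
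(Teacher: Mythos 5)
Your proposal is correct and follows essentially the same argument as the paper: combine the gap inequality $f(\tilde x)+\tilde z \le \frac{1}{1-P}\,\OPT_D \le \frac{1}{1-P}\,\OPT_{\RO}$ with the hypothesis $\max_{s\in S} Q(\tilde x,s) \le \tilde z$ to bound the objective value of $\tilde x$ in \RO. The extra remarks on feasibility of $\tilde x$ and on why $\OPT_D \le \OPT_S$ are fine but are details the paper simply takes for granted.
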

\begin{proof}
  Since~$(\tilde x, \tilde z)$ is a master solution of~$\MAS_D$ with gap~$P$, we have
  \begin{align}
    f(\tilde{x})+\tilde{z}\le \frac{1}{1-P}\cdot\OPT_D \le \frac{1}{1-P}\cdot\OPT_S = \frac{1}{1-P}\cdot\OPT_{\RO}. \label{eq:master-sol-gap}
  \end{align}
  Using the assumption that $Q(\tilde x, s) \le \tilde z$ for all~$s \in S$ together with~\eqref{eq:master-sol-gap} now yields the following upper bound on the objective value of the first-stage solution~$\tilde x$ in~\eqref{eq:mod}:
  \begin{align*}
    f(\tilde{x}) + \max_{s \in S} Q(\tilde x, s)
    \le f(\tilde{x}) + \tilde z
    \le \frac{1}{1-P}\cdot \OPT_{\RO}.
  \end{align*}
  This shows the claim.\qed
\end{proof}

Proposition~\ref{prop:gappropagation} shows that, if the current master solution~$(\tilde{x},\tilde{z})$ with gap~$p$ for~$\MAS_D$ satisfies $Q(\tilde x, s) \le \tilde z$ for all~$s \in S$, it also has gap~$p$ for~\eqref{eq:mod}. Therefore, if the goal is to solve~\eqref{eq:mod} up to a target gap of~$p$, we can terminate the solution process in this case.

Proposition~\ref{prop:zbound} below generalizes this statement to the case where only a target gap~$P$ larger than the gap~$p$ obtained for~$\MAS_D$ is to be reached for~\eqref{eq:mod}.
\rev{This is useful when the master problem has been solved with a gap strictly smaller than the user-defined target gap. This is indeed often the case with modern MIP solvers, which usually return solutions with gap strictly smaller than the user-defined target gap. Moreover, one can explicitly enforce a gap strictly smaller than the user-defined target gap by setting $\mu < 1$ in Algorithm~\ref{alg:general}, which sets the gap to be reached for~$\MAS_D$ to~$\mu P$.}

\begin{proposition}\label{prop:zbound}
  Let $P \in [0, 1)$ be the desired user-defined target gap for~\eqref{eq:mod} and $\mu \in [0,1]$ the master-gap factor.
  Suppose that~$\MAS_D$ has been solved up to a gap~$p$ with~$0\le p \le \mu P$ for some subset~$D\subseteq S$, and let~$(\tilde x, \tilde z)$ denote the obtained master solution.
  If $Q(\tilde x, s) \le \frac{1-p}{1-P}\cdot \tilde z + \frac{P-p}{1-P}\cdot f(\tilde x)$ for all~$s \in S$,
  then the first-stage solution~$\tilde x$ is a solution of~\eqref{eq:mod} with gap~$P$.
\end{proposition}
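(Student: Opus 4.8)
The plan is to mirror the two-step structure of the proof of Proposition~\ref{prop:gappropagation}, but to carry the first-stage cost $f(\tilde x)$ along separately so that the weaker hypothesis can be absorbed into the final bound. First I would record what the gap hypothesis on the master problem yields: since $(\tilde x, \tilde z)$ is a master solution of~$\MAS_D$ with gap~$p$, exactly as in~\eqref{eq:master-sol-gap} we get $f(\tilde x) + \tilde z \le \frac{1}{1-p}\cdot\OPT_D \le \frac{1}{1-p}\cdot\OPT_S = \frac{1}{1-p}\cdot\OPT_{\RO}$, where the middle inequality uses $\OPT_D \le \OPT_S$ because enlarging the scenario set only adds constraints to the master problem.

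Next I would bound the objective value of~$\tilde x$ in~\ref{eq:mod}. Applying the hypothesis $Q(\tilde x, s) \le \frac{1-p}{1-P}\cdot\tilde z + \frac{P-p}{1-P}\cdot f(\tilde x)$, which holds for every $s \in S$ and hence for the maximizing scenario, gives $f(\tilde x) + \max_{s \in S} Q(\tilde x, s) \le f(\tilde x) + \frac{1-p}{1-P}\cdot\tilde z + \frac{P-p}{1-P}\cdot f(\tilde x)$. The single algebraic step is to recombine the $f(\tilde x)$ terms via $1 + \frac{P-p}{1-P} = \frac{1-p}{1-P}$, so that this upper bound equals $\frac{1-p}{1-P}\cdot(f(\tilde x) + \tilde z)$. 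Chaining this with the first estimate yields $f(\tilde x) + \max_{s\in S} Q(\tilde x, s) \le \frac{1-p}{1-P}\cdot\frac{1}{1-p}\cdot\OPT_{\RO} = \frac{1}{1-P}\cdot\OPT_{\RO}$, which by Definition~\ref{def:gap} is precisely the assertion that $\tilde x$ is a solution of~\ref{eq:mod} with gap~$P$.

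I do not expect a genuine obstacle; the proof is essentially a weighted combination of two inequalities. The only points requiring mild care are that $0 \le p \le P < 1$ forces the denominators $1-p$ and $1-P$ to be positive (so multiplying inequalities by $\tfrac{1}{1-p}$ and $\tfrac{1-p}{1-P}$ preserves their direction) and that $P - p \ge 0$, which also shows that the hypothesis here is a genuine relaxation of the one in Proposition~\ref{prop:gappropagation}; indeed, setting $p = P$ reduces the assumption to $Q(\tilde x, s) \le \tilde z$ and recovers Proposition~\ref{prop:gappropagation}, a convenient sanity check to mention at the end.
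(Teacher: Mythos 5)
Your proposal is correct and follows essentially the same route as the paper: it reuses the master-gap bound $f(\tilde x)+\tilde z \le \frac{1}{1-p}\cdot\OPT_{\RO}$ (the analogue of~\eqref{eq:master-sol-gap} with gap~$p$), applies the hypothesis to the maximizing scenario, and recombines the $f(\tilde x)$ terms via $1+\frac{P-p}{1-P}=\frac{1-p}{1-P}$ to conclude $f(\tilde x)+\max_{s\in S}Q(\tilde x,s)\le \frac{1}{1-P}\cdot\OPT_{\RO}$. The added remarks on sign conditions and the reduction to Proposition~\ref{prop:gappropagation} when $p=P$ are fine but not needed.
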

\begin{proof}
Combining~\eqref{eq:master-sol-gap} with the assumption that $Q(\tilde x, s) \le \frac{1-p}{1-P}\cdot \tilde z + \frac{P-p}{1-P}\cdot f(\tilde x)$ for all~$s \in S$ and using that $1+\frac{P-p}{1-P}=\frac{1-p}{1-P}$, we obtain
  \begin{align*}
    f(\tilde{x}) + \max_{s\in S}Q(\tilde{x},s)
    &\le f(\tilde{x}) + \frac{1-p}{1-P}\cdot\tilde{z} + \frac{P-p}{1-P}\cdot f(\tilde{x}) \\
    &= \left(1+\frac{P-p}{1-P}\right)\cdot f(\tilde{x}) + \frac{1-p}{1-P}\cdot\tilde{z} \\
    &= \frac{1-p}{1-P}\cdot\left(f(\tilde{x})+\tilde{z}\right) \\
    &\le \frac{1-p}{1-P}\cdot\frac{1}{1-p}\cdot \OPT_{\RO} \\
    &= \frac{1}{1-P}\cdot\OPT_{\RO},
  \end{align*}
    which shows the claim.\qed
\end{proof}

In the following, we denote the adjusted bound from Proposition~\ref{prop:zbound} by $\tilde z'\coloneqq \frac{1-p}{1-P}\cdot \tilde z + \frac{P-p}{1-P}\cdot f(\tilde x)$. Proposition~\ref{prop:zbound} generalizes Proposition~\ref{prop:gappropagation} by showing that a gap of at most~$P\geq p$ is reached for~\eqref{eq:mod} if the master solution~$(\tilde x, \tilde z)$ satisfies~$Q(\tilde x, s) \le \tilde z'$ for all~$s \in S$. Therefore, if~$P$ is the target gap the user aims to achieve for~\eqref{eq:mod}, we can terminate the solution process if $Q(\tilde{x}, s) \le \tilde z'$ for all~$s \in S$. Since~$\tilde z'\geq \tilde z$ and $Q(\tilde{x},s)\leq \tilde{z}$ already holds for all scenarios~$s\in D$ when~$(\tilde x, \tilde z)$ is the current master solution of~$\MAS_D$, this, in particular, means that we do not have to consider the second-stage problems for the scenarios~$s\in D$ at all during the search for a bad scenario. Moreover, comparing the current upper bounds of the second-stage problems for the scenarios in~$S\setminus D$ to~$\tilde z'$ instead of~$\tilde z$ has two advantages: First, it may allow an earlier termination of the overall solution process. Second, also an earlier termination of the solution process for each individual second-stage problem can be possible because the process can be stopped as soon as the upper bound is less than or equal to~$\tilde z'$.

\subsection{Algorithm description} \label{sec:alg-ours}

In this section, we present our new approximate \CCG{} method, called the \emph{approximate scenario bracketing procedure} (ASBP). The algorithm is embedded into the general \CCG{}-method structure from Algorithm~\ref{alg:general} and it combines and extends the ideas of the \CCG{} methods from \cite{toenissen11} and \cite{RODRIGUES2021499}. Moreover, it uses new ideas to further reduce computation time -- particularly in the case where~\eqref{eq:mod} is only to be solved up to a non-zero target gap. Our new implementation of \FBS() used in ASBP is shown in Algorithm~\ref{alg:ours}.

\begin{algorithm}
    \begin{small}
 	\caption{Implementation of $\FBS()$ in our method~(ASBP)}\label{alg:ours}
	\begin{algorithmic}[1]
 \Statex \textbf{Parameters:} Target gap~$P \in [0, 1)$, time MT used for solving~$\MAS_D$ in current iteration, parameters~$\text{TL}_\text{linear}$ and~$\text{TL}_\text{min}$
		\Statex \textbf{Input:} Scenario set~$D\subseteq S$,  master solution~$(\tilde{x},\tilde{z})$ of~$\MAS_D$ with gap~\mbox{$p \le P$}
		\Statex \textbf{Output:} A scenario~$s \in S$
		\Statex \textbf{Initialization:} $R \gets S \setminus D$, $\UB^s \gets +\infty,\; \LB^s \gets 0$,\; $\rho^s \gets \max(\text{TL}_\text{linear} \cdot \text{MT}, \text{TL}_\text{min}) \;\forall s \in S$.
		\For{$s \in S \setminus D$}\label{algline:o3}
		\State \parbox[t]{\dimexpr\linewidth-\algorithmicindent}{
			Run a heuristic for~$Q(\tilde x, s)$. If the heuristic finds a feasible solution, set $\UB^s$ to the corresponding second-stage cost. If the heuristic also returns a lower bound, set $\LB^s$ to  this lower bound.\label{algline:oheuristicstep}
		}
		\EndFor\label{algline:o8}
            \State $\tilde z' \gets \frac{1-p}{1-P} \cdot \tilde z + \frac{P-p}{1-P} \cdot f(\tilde x)$.\label{algline:odefz}
    	\While{True}\label{algline:owhilestart}
                \State $R \gets \{s \in R: \UB^s > \tilde z' \text{ and } \UB^s \ge \max_{r \in R} \LB^r\}$. \label{algline:o9}
                \If{$R = \emptyset$}
    			\State \Return any~$s \in D$.\label{algline:o15}
    		\EndIf
    		\State Let $k \in \argmax_{s\in R} \UB^s$. \label{algline:choosek}
    		\If{$|R| = 1$ and $\LB^k > \tilde z'$} \label{algline:oursz2}
    			\State \Return $k$.\label{algline:returnpop}
    		\EndIf
                \If{$\rho^k \le 0$ or $\LB^k = \UB^k$} \label{algline:ifreturntl}
    			\State \Return $k$.\label{algline:returntl}
    		\EndIf
    		\If{MIP model for $Q(\tilde x, k)$ not created yet}
    			\State Create MIP model for $Q(\tilde x, k)$.
    		\EndIf
    		\State Set time limit of MIP model for $Q(\tilde x, k)$ to~$\rho^k$.
    		\State \parbox[t]{\dimexpr\linewidth-\algorithmicindent}{
                    Start or continue solving the MIP model for~$Q(\tilde{x}, k)$ until the upper bound is strictly decreased, the lower bound is strictly increased, the time limit is reached, or the MIP solver decides that~$Q(\tilde{x}, k)$ is infeasible or solved to optimality.
                }\label{algline:osolvestep}\vspace{0.4em}
                \If{$Q(\tilde{x}, k)$ is infeasible} \State return $k$\EndIf
    		\State \parbox[t]{\dimexpr\linewidth-\algorithmicindent}{
                    Update~$\LB^k$ and~$\UB^k$ and decrease~$\rho^k$ by the time used by the MIP solver in this step.\label{algline:oupdateUBLB}
                }\vspace{0.4em}
        \EndWhile \label{algline:owhilend}
	\end{algorithmic}
    \end{small}
\end{algorithm}

Like the implementation of $\FBS()$ used in the ISAM (Algorithm~\ref{alg:toenissen}), our implementation shown in Algorithm~\ref{alg:ours} first applies a heuristic for the second-stage problem~$Q(\tilde{x}, s)$ of each scenario~$s$ (in our case, only for the scenarios~$s\in S\setminus D$ that could possibly be added to~$D$) to obtain upper bounds~$\UB^s$. Afterwards, we solve the second-stage problems in non-increasing order of~$\UB^s$, again as in Algorithm~\ref{alg:toenissen}. However, we use several new speed-up techniques that can often reduce the required computation time significantly.

\medskip

First, we also use a lower bound~$\LB^s$ on the optimal objective values of~$Q(\tilde{x},s)$ for each~$s\in S\setminus D$. Here, the first non-zero value for~$\LB^s$ can be obtained either when starting to solve the MIP model for~$Q(\tilde{x},s)$ for the first time, or already when running the heuristic for~$Q(\tilde{x},s)$ upfront in cases where the heuristic provides a lower bound. This is the case, for example, if the heuristic consists of a time-limited run of an MIP solver. The lower bounds are then used in our algorithm to speed up the computation of a bad scenario as follows: If $\UB^s \le \LB^{s'}$ for two scenarios~$s, s'$, we know that the optimal objective values of the corresponding second-stage problems satisfy $Q(\tilde x, s) \le Q(\tilde x, s')$, and can therefore disregard scenario~$s$ in the search for a worst scenario for~$\tilde{x}$ since scenario~$s'$ is provably as bad or worse.

\medskip

Second, whenever the solver finds a new best solution of the second-stage problem that is currently being solved, we decrease the upper bound $\UB^s$ of the corresponding scenario~$s$ accordingly and check whether~$s$ is still a worst scenario for the given first-stage solution~$\tilde{x}$ in~$S\setminus D$ with respect to~$\UB$. If this is still the case, we continue to solve the second-stage problem of scenario~$s$. Otherwise, we pause this problem's solution process and continue solving the second-stage problem for a scenario that is now worst for~$\tilde{x}$ in~$S\setminus D$ with respect to~$\UB$. This technique aims to avoid solving more than one scenario's second-stage problem to optimality, which can save significant computation time.

\medskip

Third, in order to prevent the algorithm from getting stuck on a particularly hard second-stage problem, we introduce a time limit~$\rho^s$ that upper bounds the total computation time invested for solving the second-stage problem of each scenario~$s\in S\setminus D$. If the time limit has already been reached in the previous iterations for the scenario that is currently worst for~$\tilde{x}$ in~$S\setminus D$ with respect to~$\UB$, we immediately return this scenario, which is then added to the scenario set~$D$ considered in the master problem for the following iteration of Algorithm~\ref{alg:general}. Note, however, that the task of finding a bad scenario for the current first-stage solution does not get harder with increasing cardinality of~$D$, i.e., with growing number of iterations
of Algorithm~\ref{alg:general}. In fact, this task actually becomes easier since we do not have to deal with scenarios in~$D$ anymore in this process. However, the size of the master problem and, therefore, the difficulty of solving it can grow significantly in each iteration. Thus, we increasingly focus on finding a worst scenario with growing number of iterations of Algorithm~\ref{alg:general}. We do this by gradually increasing the time limits for the second-stage problems after each iteration. Specifically, in order to relate the time limits to the increasing size of the master problem, we set the time limit to depend linearly on the time spent on solving the master problem in the current iteration of Algorithm~\ref{alg:general}. In Algorithm~\ref{alg:ours}, we allow to set this dependency factor $\text{TL}_\text{linear} \ge 0$ as well as a minimum time limit $\text{TL}_\text{min} \ge 0$ as parameters. Assuming $\text{TL}_\text{min} > 0$, the minimum time limit prevents the time limit from becoming zero when the master problem is solved in (almost) zero time. This can happen in some cases when using $\Init() = \emptyset$ to initialize~$D$ in Algorithm~\ref{alg:general}. Moreover, in our implementation, we allow the user to provide an already available first-stage solution for the initial set~$D$ to the algorithm. Also in this case, $\text{TL}_\text{min}$ is used as the time limit for each second-stage problem in the first run of $\FBS()$.

\medskip

Finally, an important new technique used in our method is that we exploit a user-defined non-zero target gap using the ideas presented in Section~\ref{sec:alg-gaps}. As in Algorithm~\ref{alg:rodriguez}, one can compare the current upper bounds on the second-stage costs of the different scenarios not only to each other, but also to the bound~$\tilde z$ from the provided master solution. In case of a non-zero user-defined target gap for~\eqref{eq:mod}, we have shown in Section~\ref{sec:alg-gaps} that using the alternative bound $\tilde{z}'\geq \tilde{z}$ from Proposition~\ref{prop:zbound} instead may allow for an earlier termination of the overall solution process, since all scenarios~$s$ with $\UB^s \le \tilde z'$ can be disregarded during the search for a bad scenario. In particular, because all scenarios~$s\in D$ have optimal value at most~$\tilde{z}\leq \tilde{z}'$ when~$(\tilde{x},\tilde{z})$ is the master solution provided in the input, we do not have to consider the second-stage problems for the scenarios~$s\in D$ at all during the search for a bad scenario.
From these observations, we derive the following termination condition for Algorithm~\ref{alg:ours}:
\begin{itemize}
\item A currently worst scenario~$s$ for~$\tilde{x}$ in~$S \setminus D$ with respect to~$\UB$ is the only scenario remaining to consider, and~$\UB^s$ cannot become smaller or equal to~$\tilde z'$ due to the lower bound~$\LB^s$. Then, scenario~$s$ is returned (Line~\ref{algline:returnpop}).
\item The time limit is reached for a
scenario~$s$ that is currently worst for~$\tilde{x}$ in~$S \setminus D$
with respect to~$\UB$, or such  a scenario's second-stage problem has already been solved to optimality. Then, scenario~$s$ is returned (Line~\ref{algline:returntl}).
\item All scenarios can be disregarded because $\UB^s \le \tilde{z}'$ for all~$s \in S\setminus D$ (and, thus, for all~$s \in S$). Then, any scenario~$s \in D$ is returned (Line~\ref{algline:o15}), which leads to immediate termination of Algorithm~\ref{alg:general} (as in Algorithm~\ref{alg:rodriguez}).
\end{itemize}

Note that, due to the use of the time limits, the ASBP is, in general, neither guaranteed to return a worst scenario nor a scenario whose optimal second-stage cost is larger than~$\tilde z'$.

\subsection{Analysis of our algorithm}\label{sec:alg-ours-analysis}

In this section, we prove the correctness of our approximate scenario bracketing procedure (ASBP) introduced in Section~\ref{sec:alg-ours}.
In Lemma~\ref{lemma:ouralg}, we focus on Algorithm~\ref{alg:ours}.
Theorem~\ref{th:general} then shows that Algorithm~\ref{alg:general} returns the desired result when using Algorithm~\ref{alg:ours} as the implementation of the subroutine \FBS(). For the proofs, we use the following assumptions on the heuristic applied in Line~\ref{algline:oheuristicstep} of Algorithm~\ref{alg:ours} and on the utilized MIP solver:

\begin{assumption}\label{assum:total}
\begin{enumerate}[{(a)}]
    \item The heuristic used in Line~\ref{algline:oheuristicstep} of Algorithm~\ref{alg:ours} terminates in finite time for any second-stage problem~$Q(x,s)$ with~$x\in\mathcal{X}$ and~$s\in S$.\label{assum:a}
    \item The MIP solver used in Algorithm~\ref{alg:ours} terminates in finite time for any second-stage problem~$Q(x,s)$ with~$x\in\mathcal{X}$ and~$s\in S$ and either returns an optimal solution or decides correctly that the problem is infeasible. In both cases, the upper and lower bound will be equal. \label{assum:b}
    \item The MIP solver used in Algorithm~\ref{alg:general} terminates in finite time for any master problem $\MAS_D$ with $D \subseteq S$ and \rev{returns a solution with gap $p\le \mu P$ or decides correctly that the problem is infeasible.} \label{assum:c}
\end{enumerate}
\end{assumption}

\begin{lemma}\label{lemma:ouralg}
    \begin{enumerate}[{(1)}]
        \item \revv{Under Assumptions~\ref{assum:total}~\eqref{assum:a} and~\ref{assum:total}~\eqref{assum:b}}, Algorithm~\ref{alg:ours} always terminates in finite time returning some scenario~$s\in S$. \label{lemma:ouralg1}
        \item If the scenario returned by Algorithm~\ref{alg:ours} is in~$D$, then $Q(\tilde x, s) \le \frac{1-p}{1-P}\cdot\tilde{z} + \frac{P-p}{1-P}\cdot f(\tilde{x})$ holds for all~$s \in S$.
          \label{lemma:ouralg2}
    \end{enumerate}
\end{lemma}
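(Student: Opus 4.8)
The two parts of the lemma call for rather different arguments, so I would treat them separately.

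For part~(\ref{lemma:ouralg1}) (finite termination returning a scenario in~$S$), the plan is to show that neither the \textbf{for}-loop nor the \textbf{while}-loop can run forever. The \textbf{for}-loop over~$S\setminus D$ in Lines~\ref{algline:o3}--\ref{algline:o8} is finite because~$S$ is finite and, by Assumption~\ref{assum:total}(\ref{assum:a}), each call of the heuristic terminates in finite time. For the \textbf{while}-loop (Lines~\ref{algline:owhilestart}--\ref{algline:owhilend}), I would argue that each iteration that does not already return either permanently removes a scenario from~$R$ or makes measurable progress toward doing so. Concretely: if $R=\emptyset$ we return in Line~\ref{algline:o15}; if the chosen scenario~$k$ triggers Line~\ref{algline:returnpop} or Line~\ref{algline:returntl} we return; otherwise we must have $\rho^k>0$ and $\LB^k<\UB^k$, so in Line~\ref{algline:osolvestep} the MIP solver runs and, by Assumption~\ref{assum:total}(\ref{assum:b}), it terminates in finite time either strictly decreasing~$\UB^k$, strictly increasing~$\LB^k$, consuming a positive chunk of~$\rho^k$, or declaring~$Q(\tilde x,k)$ solved/infeasible (making $\LB^k=\UB^k$). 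Since the total time budget~$\rho^k$ assigned to each scenario is finite and is decreased by a positive amount whenever the solver runs without finishing, after finitely many visits to scenario~$k$ either $\rho^k\le 0$ or $\LB^k=\UB^k$, so Line~\ref{algline:ifreturntl} fires unless~$k$ has meanwhile left~$R$. The potential subtlety here is a scenario oscillating in and out of the role of the current argmax~$k$; I would handle this by noting that $\UB^s$ is monotonically non-increasing and $\LB^s$ monotonically non-decreasing for every~$s$ (they are only ever updated by MIP-solver progress), so the quantities driving Line~\ref{algline:o9} are monotone, and a scenario removed from~$R$ in Line~\ref{algline:o9} never re-enters. Hence after finitely many iterations~$R$ is emptied or some return statement is reached; and every return statement returns a scenario in~$S$ (either some $s\in D\subseteq S$ in Line~\ref{algline:o15}, or $k\in R\subseteq S$ in Lines~\ref{algline:returnpop} and~\ref{algline:returntl}). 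This is the part I expect to require the most care, essentially bookkeeping the monotonicity of the bounds and the time budgets.

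For part~(\ref{lemma:ouralg2}), suppose the returned scenario lies in~$D$. The only return statement that can output a scenario in~$D$ is Line~\ref{algline:o15}, reached precisely when $R=\emptyset$ at the top of a \textbf{while}-iteration after the update in Line~\ref{algline:o9}. I would first observe that a scenario~$s\in S\setminus D$ is dropped from~$R$ in Line~\ref{algline:o9} only if $\UB^s\le\tilde z'$ or $\UB^s<\max_{r\in R}\LB^r$. In the first case, since $\UB^s$ is a valid upper bound on~$Q(\tilde x,s)$, we directly get $Q(\tilde x,s)\le\tilde z'$. In the second case, there is a scenario~$r$ still in~$R$ at that moment with $\LB^r>\UB^s\ge Q(\tilde x,s)$, and since $\LB^r\le Q(\tilde x,r)$, we get $Q(\tilde x,s)< Q(\tilde x,r)$; I would then follow the chain of such dominations—using that eventually every scenario leaves~$R$ and the last ones to leave must leave via the $\UB^s\le\tilde z'$ condition (a scenario cannot be dominated by its own lower bound, and the maximum-$\UB$ scenario in~$R$ is never strictly below $\max_{r\in R}\LB^r$ unless its own $\LB$ exceeds its $\UB$, in which case it is solved and $\UB^s=Q(\tilde x,s)\le\tilde z'$ must have held for it to be dropped rather than returned). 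Combining, every $s\in S\setminus D$ satisfies $Q(\tilde x,s)\le\tilde z'$ when $R=\emptyset$. For $s\in D$, the master solution $(\tilde x,\tilde z)$ is feasible for~$\MAS_D$, so $\tilde z\ge g^s(\tilde x,\tilde y^s)\ge Q(\tilde x,s)$, and since $\tilde z\le\tilde z'$ (because $\tilde z'=\tfrac{1-p}{1-P}\tilde z+\tfrac{P-p}{1-P}f(\tilde x)\ge\tilde z$ as $p\le P$ and $f(\tilde x)\ge\tilde z$ need not hold—rather $\tfrac{1-p}{1-P}\ge 1$ and $\tfrac{P-p}{1-P}f(\tilde x)\ge 0$), we obtain $Q(\tilde x,s)\le\tilde z\le\tilde z'$ as well. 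Recalling $\tilde z'=\frac{1-p}{1-P}\cdot\tilde z+\frac{P-p}{1-P}\cdot f(\tilde x)$, this is exactly the claimed inequality for all $s\in S$.

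The cleanest way to organize the second part is probably to prove the loop invariant ``at the top of every \textbf{while}-iteration, every scenario in $(S\setminus D)\setminus R$ satisfies $Q(\tilde x,s)\le\tilde z'$,'' established by induction on iterations using the analysis of Line~\ref{algline:o9} above, and then to note that termination via Line~\ref{algline:o15} means $R=\emptyset$, so the invariant covers all of $S\setminus D$, while the $D$-scenarios are handled by master feasibility and $\tilde z\le\tilde z'$. The main obstacle throughout is purely the case analysis of which termination line fires and making the domination-chain argument airtight; there is no deep mathematics, only a careful audit of the algorithm's branches.
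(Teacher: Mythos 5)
Your main line of argument is correct and essentially parallels the paper's proof. Part~(\ref{lemma:ouralg1}) is argued the same way; one small remark: ``decreased by a positive amount whenever the solver runs without finishing'' does not by itself bound the number of visits to a scenario (the decrements could shrink), which is why the paper instead grounds this step in Assumption~\ref{assum:total}(\ref{assum:b}): the \emph{total} solver time per scenario is finite, and once $\LB^k=\UB^k$ (or $\rho^k\le 0$) the next selection of~$k$ returns it via Line~\ref{algline:ifreturntl}. For part~(\ref{lemma:ouralg2}), your forward chain of dominations is a mild variant of the paper's argument, which instead considers a scenario~$t$ maximizing the value $\UB^{t,*}$ of its upper bound at the moment of removal from~$R$ and derives a contradiction from $\UB^{t,*}>\tilde z'$; both proofs rest on the same two facts (a removal in Line~\ref{algline:o9} requires $\UB^s\le\tilde z'$ or $\UB^s<\max_{r\in R}\LB^r$, and the bounds are monotone), so the difference is cosmetic. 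The treatment of the scenarios in~$D$ via feasibility of $(\tilde x,\tilde z)$ and $\tilde z\le\tilde z'$ matches the paper exactly.

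One genuine warning: the ``cleanest organization'' you propose at the end would not survive a write-up. The statement ``at the top of every while-iteration, every scenario in $(S\setminus D)\setminus R$ satisfies $Q(\tilde x,s)\le\tilde z'$'' is \emph{not} a loop invariant. A scenario can be dropped in Line~\ref{algline:o9} because it is dominated by the lower bound of a scenario still in~$R$ while its own true cost exceeds~$\tilde z'$: for instance, with $\tilde z'=5$, $\UB^a=10$, $Q(\tilde x,a)=8$, $\LB^b=50$, scenario~$a$ is removed although $Q(\tilde x,a)>\tilde z'$. This is harmless for the lemma only because in such a run the dominating scenario~$b$ can never itself be removed (its $\UB^b\ge\LB^b>\tilde z'$ and it is never below the maximum lower bound), so $R=\emptyset$ and hence Line~\ref{algline:o15} are never reached; but the invariant is false at intermediate iterations, so an induction on iterations using it breaks at the inductive step. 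Keep your chain argument (or the paper's extremal choice), which is applied only under the hypothesis that $R=\emptyset$ at termination and therefore that every scenario in $S\setminus D$ was eventually removed. Finally, the parenthetical case ``unless its own LB exceeds its UB'' cannot occur, since $\LB^s\le\UB^s$ always; it is not needed for the argument.
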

\begin{proof}
    \begin{enumerate}[{(1)}]
    \item
        The time spent in the for-loop in Lines~\ref{algline:o3} to~\ref{algline:o8} is finite due to the requirements on the heuristic in Assumption~\ref{assum:total}~\eqref{assum:a}. Thus, it only remains to show the finiteness of the while-loop in Lines~\ref{algline:owhilestart}--\ref{algline:owhilend}.
        Since, for each second-stage problem~$Q(\tilde{x},k)$, the MIP solver only needs finite time to either find an optimal solution or detect infeasibility by Assumption~\ref{assum:total}~\eqref{assum:b}, it can only spend finite time on each second-stage problem~$Q(\tilde{x},k)$ in Line~\ref{algline:osolvestep} overall (even without a time limit). Moreover, after an optimal solution of a second-stage problem~$Q(\tilde{x},k)$ has been found or infeasibility has been detected, we have $\LB^k=\UB^k$ by Assumption~\ref{assum:total}~\eqref{assum:b}. Then, if scenario~$k$ is ever selected again in Line~\ref{algline:choosek} in some future iteration and the algorithm does not terminate earlier, it will terminate and return scenario~$k$ in Line~\ref{algline:returntl}, due to the condition in Line~\ref{algline:ifreturntl}.
        Therefore, the while-loop terminates in finite time as claimed, and some scenario~$s\in S$ must then be returned since this happens for each possible termination condition of the while-loop.

        \medskip

    \item
        To simplify notation, let $\tilde{z}' = \frac{1-p}{1-P}\cdot \tilde{z} + \frac{P-p}{1-P}\cdot f(\tilde x)$ as in the algorithm.
        Since $(\tilde{x},\tilde{z})$ is the master solution provided in the input of the algorithm, the scenarios~$s \in D$ have optimal value at most $\tilde{z} \leq \tilde{z}'$, so the claim holds for these scenarios.

        It remains to show the claim for the scenarios in~$S \setminus D$. Since a scenario in~$D$ can only be returned in Line~\ref{algline:o15}, we know that~$R=\emptyset$ must hold in this case. Therefore, as~$R\coloneqq S\setminus D$ is chosen during initialization, each scenario from~$S\setminus D$ must be removed from~$R$ in some iteration of the algorithm.
        For each scenario~$s\in S\setminus D$, let~$\UB^{s,*}$ denote the value of~$\UB^s$ at the time when~$s$ is removed from~$R$ in Line~\ref{algline:o9}, and let~$t\in \argmax_{s\in S\setminus D} \UB^{s,*}$ be a scenario with the largest upper bound~$\UB^{t,*}$ at removal from~$R$. Then, since $Q(\tilde{x},s)\le \UB^{s,*}\le \UB^{t,*}$ for all~$s\in S\setminus D$, it suffices to show that $\UB^{t,*}\leq \tilde{z}'$.

        For the sake of a contradiction, suppose that $\UB^{t,*}>\tilde{z}'$. Then,
        $\UB^{t,*}<\max_{r\in R}\LB^r$ must hold
        when scenario~$t$ is removed from~$R$ in Line~\ref{algline:o9}. This implies that any scenario~$s\in \argmax_{r\in R}\LB^r$ satisfies~$\UB^{s,*}\ge \LB^s>\UB^{t,*}$ at this point, which contradicts the choice of scenario~$t$. This finishes the proof.\qed
    \end{enumerate}
  \end{proof}

Note that, due to the use of time limits for the second-stage problems, the inverse direction of statement~(2) in Lemma~\ref{lemma:ouralg} is not true in general. Note further that the proof of part~(1) does not use the existence of any time limits. Thus, the algorithm also terminates in finite time if no (finite) time limits are used for the second-stage problems.

\begin{theorem}\label{th:general}
  \revv{Under Assumption~\ref{assum:total}~\eqref{assum:c}, }when using Algorithm~\ref{alg:ours} to implement \linebreak\FBS(), Algorithm~\ref{alg:general} always returns a feasible solution of~\eqref{eq:mod} with gap at most~$P$ \revv{or decides correctly that the problem is infeasible}, independent of the subset~$D\subseteq S$ returned by \Init() during initialization.
\end{theorem}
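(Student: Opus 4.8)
The plan is to prove Theorem~\ref{th:general} in two parts, corresponding to correctness and termination of Algorithm~\ref{alg:general} when \FBS() is implemented by Algorithm~\ref{alg:ours}. First I would establish \emph{correctness}, i.e., that if Algorithm~\ref{alg:general} terminates, then the returned first-stage solution~$\tilde{x}$ has gap at most~$P$ for~\ref{eq:mod}. By the structure of Algorithm~\ref{alg:general}, termination happens exactly when \FBS() returns a scenario~$s\in D$. By Lemma~\ref{lemma:ouralg}~\eqref{lemma:ouralg2}, this implies $Q(\tilde{x},s)\le \frac{1-p}{1-P}\cdot\tilde{z}+\frac{P-p}{1-P}\cdot f(\tilde{x})=\tilde{z}'$ for all~$s\in S$, where~$p\le P$ is the gap with which $\MAS_D$ was solved in Line~\ref{algline:1step1} (using Assumption~\ref{assum:total}~\eqref{assum:c}, so that $\MAS_D$ is solved with gap $p\le P$). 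Proposition~\ref{prop:zbound} then immediately yields that~$\tilde{x}$ is a solution of~\ref{eq:mod} with gap~$P$. This part is essentially a direct chaining of the already-proven lemma and proposition.

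Next I would establish \emph{termination}, i.e., that the while-loop in Lines~\ref{alg:general-while-loop}--\ref{alg:general-while-loop-end} of Algorithm~\ref{alg:general} executes only finitely many times. Each individual iteration takes finite time: solving~$\MAS_D$ is finite by Assumption~\ref{assum:total}~\eqref{assum:c}, and the call to \FBS() (Algorithm~\ref{alg:ours}) is finite by Lemma~\ref{lemma:ouralg}~\eqref{lemma:ouralg1}. So the remaining task is to bound the number of iterations. The key observation is that the set~$D$ is strictly increasing: whenever Algorithm~\ref{alg:general} does not terminate, it adds the scenario~$s\notin D$ returned by \FBS() to~$D$. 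Since~$S$ is finite, $D$ can grow at most~$|S|$ times, so there are at most~$|S|+1$ iterations. I would spell out that Algorithm~\ref{alg:ours} indeed only returns scenarios in~$S$ (again by Lemma~\ref{lemma:ouralg}~\eqref{lemma:ouralg1}), so adding~$s$ to~$D$ keeps~$D\subseteq S$, and that the returned~$s$ is either in~$D$ (termination) or in~$S\setminus D$ (strict growth of~$D$).

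I also need to address the hypothesis of Lemma~\ref{lemma:ouralg} and Proposition~\ref{prop:zbound} that~$\MAS_D$ has been solved with a gap~$p\le P$ — this is guaranteed because Line~\ref{algline:1step1} solves~$\MAS_D$ with target gap~$P$, and by the gap conventions in Section~\ref{sec:alg-gaps} (inequality~\eqref{eq:gap}) the returned solution has some actual gap~$p\le P$. One minor point to handle carefully is the very first iteration when \Init() may return any~$D\subseteq S$ (possibly empty); the argument above makes no assumption on this initial~$D$ beyond~$D\subseteq S$, and indeed Lemma~\ref{lemma:ouralg} and the propositions hold for arbitrary~$D\subseteq S$, so the conclusion is independent of \Init(), as claimed.

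\medskip

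The main obstacle I anticipate is not in any single step but in being careful about the two different notions of gap in play — the user-specified target gap~$P$ versus the actual gap~$p$ with which the solver returns the master solution — and making sure the hypotheses of Lemma~\ref{lemma:ouralg}~\eqref{lemma:ouralg2} and Proposition~\ref{prop:zbound} (namely $p\le P$ and the bound involving~$\tilde{z}'$) are matched exactly to what Algorithm~\ref{alg:general} and Algorithm~\ref{alg:ours} actually produce. Everything else is a short, essentially bookkeeping argument once Lemma~\ref{lemma:ouralg} is available.
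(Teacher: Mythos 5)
Your proposal is correct and follows essentially the same route as the paper's own proof: finiteness of the number of iterations because $D$ grows by one scenario per iteration, finiteness of each iteration via Assumption~\ref{assum:total}~\eqref{assum:c} and Lemma~\ref{lemma:ouralg}~\eqref{lemma:ouralg1}, and the gap guarantee by chaining Lemma~\ref{lemma:ouralg}~\eqref{lemma:ouralg2} with Proposition~\ref{prop:zbound}. Your additional remarks on matching $p\le P$ and on the arbitrariness of \Init() are just slightly more explicit bookkeeping than the paper's version.
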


\begin{proof}
    \revv{Unless infeasibility is detected,} one scenario is added to~$D$ in each iteration, and the \revv{termination condition} in Line~\ref{algline:generalterminationcrit} of Algorithm~\ref{alg:general} is satisfied after at most $|S|$~iterations. Each iteration runs in finite time due to Assumption~\ref{assum:total}~\eqref{assum:c} and Lemma~\ref{lemma:ouralg}~\eqref{lemma:ouralg1}.

    Algorithm~\ref{alg:general} \revv{terminates} if \revv{it detects infeasibility or} Algorithm~\ref{alg:ours} returns a scenario contained in~$D$. In the latter case, according to Lemma~\ref{lemma:ouralg}~\eqref{lemma:ouralg2}, we have $Q(\tilde x, s) \le \frac{1-p}{1-P} \cdot \tilde z + \frac{P-p}{1-P} \cdot f(\tilde x)$ for all $s \in S$. By Proposition~\ref{prop:zbound}, this implies that the first-stage solution~$\tilde x$ obtained in the current iteration is a solution of~\eqref{eq:mod} with gap at most~$P$.
	\qed
\end{proof}

\section{Applications}\label{sec:applications}

This section introduces two applications that are used to test our approximate scenario bracketing procedure. The first application is a robust capacitated location routing problem (RCLRP), which features a particularly hard second stage. In fact, the second-stage problem is a location routing problem itself and includes altering sizes of warehouses and solving a capacitated vehicle routing problem to determine delivery tours between the opened warehouses and the customers. In Section~\ref{sec:RCLRP}, we present a two-stage robust model for this problem that extends the (non-robust) model from~\cite{toro}.

The second application presented in Section~\ref{sec:BACASP} is a robust integrated berth allocation and quay crane assignment and scheduling problem (BACASP) studied in~\cite{RODRIGUES2021499}. We include this problem in our tests for two main reasons. First,~\cite{RODRIGUES2021499} propose the \emph{scenario reduction procedure} (SRP) presented in Section~\ref{sec:alg-previous} specifically to solve this problem.
Second, the authors of~\cite{RODRIGUES2021499} also develop a problem-specific combinatorial heuristic for the BACASP's second stage that can be used within our algorithm. This is in contrast to the first application (RCLRP), for which we use a short run of the MIP solver as the heuristic to quickly produce a rough ordering of the scenarios. Moreover, the BACASP is structurally different from the RCLRP in that it has most of its computational challenges in the first stage, i.e., solving the master problem is more demanding than solving the second-stage problem to identify bad scenarios. Therefore, the BACASP allows to evaluate how our algorithm performs when the second stage is comparatively easy.

\subsection{Robust capacitated location routing problem}\label{sec:RCLRP}

We first consider the robust capacitated location routing problem (RCLRP). The problem combines the capacitated facility location problem with the capacitated vehicle routing problem, and even the deterministic version is known to be NP-hard \cite{contardo2013computational,montoya2015literature}.
For more background and applications of the capacitated location routing problem (CLRP), we refer to the extensive literature overview in~\cite{toro} and the literature review in~\cite{PRODHON20141}.

\subsubsection{\rev{Problem description}}
We extend the CLRP to be (recoverable) robust against uncertainty in the customer demands, which are modeled using a finite set of scenarios. In our two-stage robust problem formulation, the first-stage decisions relate to which warehouse locations from a given set of potential candidates are to be opened and how to size them. For each scenario, the second-stage decisions involve allocating customers to warehouses and forming delivery tours from each warehouse to the assigned customers. In addition\rev{, as in~\cite{alvarez},} warehouse sizes can be increased or additional warehouses can be opened in the second stage at an increased cost.
An unlimited number of homogeneous, capacitated vehicles are available and each customer is supplied by only one vehicle. Each vehicle tour ends at the same warehouse that it starts from and visits only customers along the way.
In addition to cost minimization, the objective of the problem also incorporates the minimization of vehicle emissions resulting from the delivery tours chosen in the second stage.
A detailed description of the full model is provided in %
\if\arxivVersion0%
the supplementary material (Section~B).
\else%
\ref{sec:appendix-2-LRP}.
\fi

\subsubsection{Instances}
\label{LCRP_instances}
For our tests, we create test instances based on the data provided in~\cite{toro}.
Using the provided fixed and variable costs~$e$ and~$d$ of the warehouses in the first stage, however, usually leads to only one warehouse being opened. This choice is then not influenced too much by the uncertainty in the demands, and the first-stage decision usually stays the same over several iterations. To see a more prominent effect of the robust reformulation, we choose to multiply the provided fixed and variable first-stage warehouse costs~$e$ and~$d$ by a factor of~$1/40$, which leads to more warehouses being opened and more heterogeneity in the first-stage solutions.
The second-stage warehouse costs~$e'$ and~$d'$ are $50\%$ higher than the first-stage costs, i.e., $e' = 1.5 \cdot e$ and $d' = 1.5 \cdot d$.
Finally, for the demands, we use the deterministic values provided in~\cite{toro} as nominal demands. These are then randomly perturbed as follows in order to generate the scenarios: Each customer has a chance of~$50\%$ for having a demand of~$20\%$ above the nominal demand, and a $3\%$ chance to have a demand of zero (which corresponds to the customer location being closed).

\medskip

A specific instance in our tests is determined by the set~$I$ of potential warehouses, the set~$J$ of customers, the set~$S$ of scenarios, and an instance number, which is used to seed the random generator used for the scenario generation.

\medskip

To initialize the subset~$D$ of scenarios in Algorithm~\ref{alg:general} for this application, we simply use $\Init()\coloneqq\emptyset$. This leads to a first-stage solution in which no warehouses are opened at all (i.e., all~$w^0$ and~$a^0$ are set to zero), and the first scenario to be added to~$D$ is then determined in the first call of \FBS(). \rev{We note that, as shown in %
\if\arxivVersion0%
the supplementary material (Section~D)%
\else%
\ref{sec:appendix-4-results}%
\fi%
    , other possible implementations of $\Init()$ such as $\Init()\coloneqq\{s\}$ with a randomly chosen scenario or a scenario with maximum total demand perform similar or slightly worse.}

As a heuristic for the second-stage problem, we use a \rev{$0.1$\,s} run of the MIP solver.

\subsection{Berth allocation and quay crane assignment and scheduling problem}\label{sec:BACASP}
\revv{Our second application is the robust integrated berth allocation and quay crane assignment and scheduling problem (BACASP). The deterministic version of the problem is considered in~\cite{AGRA2018138}, and the two-stage robust version is studied in~\cite{RODRIGUES2021499}.}

\subsubsection{\rev{Problem description}}
The BACASP is an operational planning problem for ports in the context of maritime freight transportation. Vessels of different lengths arrive at a port with uncertain arrival times, need to berth at a specific position (moor at their allotted place at the wharf) without intersecting other vessels, and \revv{must} be unloaded by quay cranes that can translate along the wharf. Each crane has a certain processing rate which is the amount of cargo volume that it can unload from the vessel in a certain time period. The vessel is \emph{unloaded}, when the sum of cargo volume processed by all assigned cranes equals the total cargo volume of the vessel at berthing time. When the vessel is unloaded, it immediately departs. The task is to allocate each vessel to a berth range and determine the scheduling of available cranes with the goal of minimizing the total completion time, i.e., the sum over all vessels of the difference between the departure time of the vessel and its arrival time. Here, the departure times are influenced by the selected berth allocation and quay crane assignment and schedule, e.g., since vessels might have to wait outside of the port when their assigned berth is still occupied by another vessel that is still being unloaded.
To efficiently manage the port, and to minimize the time required to process each vessel, mixed-integer programming formulations of the problem have been proposed \cite{AGRA2018138}. Moreover, several authors have studied similar problems under uncertainty in the arrival times by using two-stage robust formulations, such as the robust cyclic berth planning of container vessels \cite{hendriks2010robust} or the berth allocation problem \cite{liu2020two}. The BACASP with uncertain arrival times was first studied in the context of robust optimization in \cite{RODRIGUES2021499}. The proposed model determines the berth allocation in the first stage and the quay crane assignment and scheduling in the second stage of the robust model.

In our tests, we use the two-stage robust mixed-integer programming model of the BACASP presented in \cite{RODRIGUES2021499}. \rev{For completeness, we state the model in %
\if\arxivVersion0%
the supplementary material (Section~C).
\else%
\ref{sec:appendix-3-BACASP}.
\fi
For a more detailed description, we refer to the original paper \cite{RODRIGUES2021499}}.

\subsubsection{Instances} \label{BAQ_instances}
For our tests, we were supplied with the original instances used in \cite{RODRIGUES2021499}.
For each number of vessels, 10 randomly generated instances with random nominal arrival times, random cargo volumes, and random vessel lengths are available. The crane-related data (number of cranes, range in which they can operate, and processing rate) stay the same over all instances. We only use homogeneous instances, meaning that all cranes have the same processing rate.

To generate the scenarios, the authors of \cite{RODRIGUES2021499} used a special case of a multiple constrained budgeted uncertainty set. Assume that the arrival time of a vessel~$k$ in scenario~$s$ is given by $A^s_k \in \R_{\ge 0}$, and the vector of all arrival times in scenario~$s$ is $A^s$. The nominal arrival times for each vessel~$k$ are given by $A_k$, the maximum allowed delay of a vessel~$k$ is $\hat A_k$, $N$ is the total number of vessels and $V=\{1,\dots,N\}$ is the set of vessels. The set of vessels is split into three subsets of near-equal size and, in each of these subsets, at most one vessel can be delayed by either the maximum allowed delay or half of this maximum allowed delay. In order to define the uncertainty set formally, we first define the set~$\mathcal{D}$ of deviation vectors as
\begin{align*}
    \mathcal{D} \coloneqq \left\{\delta \in \{0,0.5,1\}^V :\;
	\sum_{k=1}^{\left[N/3 \right]-1} \lceil \delta_k \rceil \le 1,
	\sum_{k=\left[N/3 \right]}^{2\,\left[N/3 \right]-1} \lceil \delta_k \rceil \le 1,
	\sum_{k=2\,\left[N/3 \right]}^{N} \lceil \delta_k \rceil \le 1 \right\},
\end{align*}
where~$[\cdot]$ is the operator that rounds to the closest integer and $\lceil\cdot\rceil$ denotes rounding up to the next integer.
The uncertainty set~$\mathcal{A}$ for the arrival times is then given as follows:
\begin{align*}
	\mathcal{A} \coloneqq \left\{ A^s = \begin{pmatrix}
           A_1 + \hat A_1 \,\delta_1 \\
           \vdots \\
           A_N + \hat A_N \,\delta_N
         \end{pmatrix} : \delta \in \mathcal{D} \right\}.
\end{align*}
The set~$S=\{1,\dots,|\mathcal{A}|\}$ of scenarios is then of the same cardinality as~$\mathcal{A}$, and each element~$s$ in~$S$ corresponds to an element in~$\mathcal{A}$, named~$A^s$.

This combinatorial way of defining the scenarios results in a maximum possible number of scenarios that depends on the number of vessels. Table~\ref{tab:bacaspscenarios} shows this maximum possible number for each number of vessels.

\begin{table}[h!]
	\centering
	\begin{tabular}{c|cccccccccccccc}
		$N$ & 6 & 7 & 8 & 9 & 10 & 11 & 12 & 13 & 14 & 15 \\
		\hline
		$|S|$ & 125 & 175 & 245 & 343 & 441 & 567 & 729 & 891 & 1089 & 1331 \\
	\end{tabular}
 \caption{Maximum possible number of available scenarios for each number of vessels in the BACASP instances.}\label{tab:bacaspscenarios}
\end{table}

To initialize the subset~$D$ of scenarios in Algorithm~\ref{alg:general} for this application, we use \rev{$\Init()\coloneqq \{s\}$, where~$s$ results from the so-called} \emph{slack reduction heuristic}, which is identified as superior to initialization with a randomly chosen scenario in \cite{RODRIGUES2021499}. Basically, this returns the scenario for which the expected port congestion is highest, where the expected congestion is calculated by considering the minimal distances in arrival times between two consecutive vessels. \rev{As a heuristic for the second-stage problem, we use the problem-specific \emph{scenario evaluation heuristic} (SEH) as proposed in \cite{RODRIGUES2021499}.}

\section{Computational experiments}\label{sec:computational-exps}

In Section~\ref{sec:comparison}, we first compare the performance of the ASBP to the two recent column-and-constraint generation methods ISAM \cite{toenissen11} and SRP \cite{RODRIGUES2021499} on the robust capacitated location routing problem (RCLRP) described in Section~\ref{sec:RCLRP} and the robust integrated berth allocation and quay crane assignment and scheduling problem (BACASP) described in Section~\ref{sec:BACASP}. Afterwards, in Section~\ref{sec:variants}, we analyze the impact of the different techniques employed in ASBP by comparing its performance to several variants in which one of these techniques is disabled in each case.

\medskip

All experiments were conducted on a Linux system running Ubuntu~23.04. The hardware configuration comprises an AMD EPYC~7542 processor with 32~cores and 64~threads, operating at a base clock frequency of 2.9~GHz. The system is equipped with 512~GB of RAM. The experiments were implemented using Python~3.11 and Gurobi~11.0.3. For each instance, the problem was solved using a single thread to ensure consistent performance and resource availability across all experiments.

\subsection{Comparison to other \CCG{} methods }\label{sec:comparison}

All computational experiments were conducted using target gaps\rev{~$P$} of \{0\%, 5\%, 10\%\}.
\rev{In the ISAM and the SRP, the master-gap factor is set to~$\mu=1$ since choosing~$\mu<1$ would lead to the problem being solved with a smaller-than-desired gap of~$\mu P < P$ in these methods. For our ASBP, we tested master-gap factors~$\mu$ of \{0.0, 0.25, 0.5, 0.75, 1\} and identified $\mu=0.5$ as the best choice in both RCLRP and BACASP (see %
\if\arxivVersion0%
the supplementary material (Section~D)%
\else%
\ref{sec:appendix-4-results}%
\fi%
    ). Therefore, we use $\mu=0.5$ in the ASBP in our comparison.} A 30-minute time limit was used for small instances, while large instances were given a \num{3}-hour time limit. \rev{For the time limit parameters used in \FBS() in the ASBP, we use $\text{TL}_\text{linear}=\num{2}$ and $\text{TL}_\text{min}=\SI{1}{s}$.} For each combination of parameters, 10~instances were solved. For all RCLRP instances and the small BACASP instances, we ran two repetitions per instance. Due to the elevated memory requirements of the large BACASP instances, only few instances could be run simultaneously and, thus, only one repetition has been run. General details regarding the instance generation can be found in Section~\ref{LCRP_instances} for the RCLRP and Section~\ref{BAQ_instances} for the BACASP.

\medskip

The RCLRP experiments tested small and large instances, each with 5~warehouses. Small instances involved customer numbers of \{12, 16, 18, 20\}, with \{16, 64\} scenarios, leading to $160$~runs per target gap and algorithm. Large instances considered customer numbers of \{20, 22, 24, 26\}, with \{32, 64\} scenarios, resulting in $160$~runs per target gap and algorithm.

\medskip

The BACASP experiments tested vessel counts of \{6, 7, 8, 9\} for small instances, and \{10, 11, 12, 13\} for large instances. For \rev{small instances}, \{16, 64, $|S|$\} scenarios were tested, where~$|S|$ denotes the maximum possible number of scenarios for the given number of vessels (see Table~\ref{tab:bacaspscenarios}). \rev{For large instances, we tested 64 and $|S|$ scenarios.} This resulted in $240$~runs per target gap and algorithm for the small instances, and \rev{$80$}~runs per target gap and algorithm for the large instances.

\medskip

Figures~\ref{fig:lrp} and~\ref{fig:baq} show the total runtime (processor time) in seconds on a logarithmic scale against the percentage of solved instances for the three different methods ASBP (our method), ISAM \cite{toenissen11}, and SRP \cite{RODRIGUES2021499} across the different target gaps and instance sizes.

\medskip

The performance comparisons for the RCLRP are shown in Figure~\ref{fig:lrp}. This problem involves a hard second stage, and 46\% of the total computation time used by Gurobi (excluding the use as heuristic) is spent on average for solving the second-stage problem.
As Figure~\ref{fig:lrp} shows, the ASBP consistently outperforms the ISAM and the SRP by a wide margin in the RCLRP across all target gaps and instance sizes. For both instance sizes, the advantage of the ASBP grows considerably with increasing target gap, which can be seen from both a larger percentage of instances solved within the time limit as well as higher percentages solved at earlier times. For the large instances, almost no instances could be solved within the 3-hour time limit by the ISAM and the SRP, while our method ASBP solves 13.8\%, 80\%, and 83.1\% of the instances for target gaps of 0\%, 5\%, and 10\%, respectively.

\begin{figure}
\includegraphics{./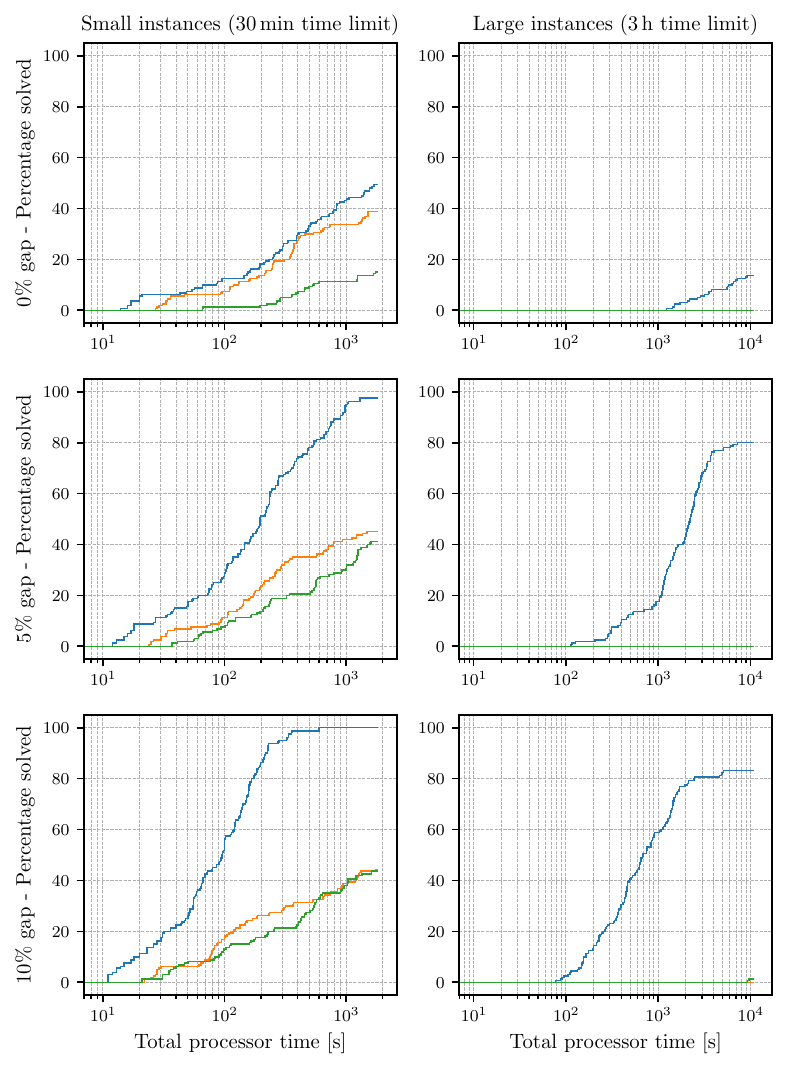}
\caption{Performance plots of our ASBP with $\mu=0.5$ in blue, the ISAM \cite{toenissen11} in orange, and the SRP \cite{RODRIGUES2021499} in green for the RCLRP.}
\label{fig:lrp}
\end{figure}

Figure~\ref{fig:baq} shows the performance comparisons for the BACASP. Here, the second-stage problem is much easier to solve, and only 31\% of the total computation time used by Gurobi is spent on average for solving the second-stage problem. \revv{As Figure~\ref{fig:baq} shows, in the BACASP, the ASBP performs similarly to the SRP in the runs with zero target gap. The SRP was designed specifically for the BACASP, which explains its good performance in this application. The runs with non-zero target gap, however, show clear advantages of ASBP, as it consistently outperforms both the SRP and ISAM by a notable margin. However, the performance difference is smaller than it was for the RCLRP. This can be explained by the fact that our new ASBP is mainly targeted at problems with a challenging second stage, which offer more potential for saving time during the identification of bad scenarios. Therefore, the rather easy-to-solve second-stage problem in the BACASP reduces the impact of the new techniques for faster identification of bad scenarios in the ASBP. Still, our new method shows a strong performance competitive with the problem-specific SRP in the BACASP, and it considerably outperforms the ISAM.}

\begin{figure}
\includegraphics{./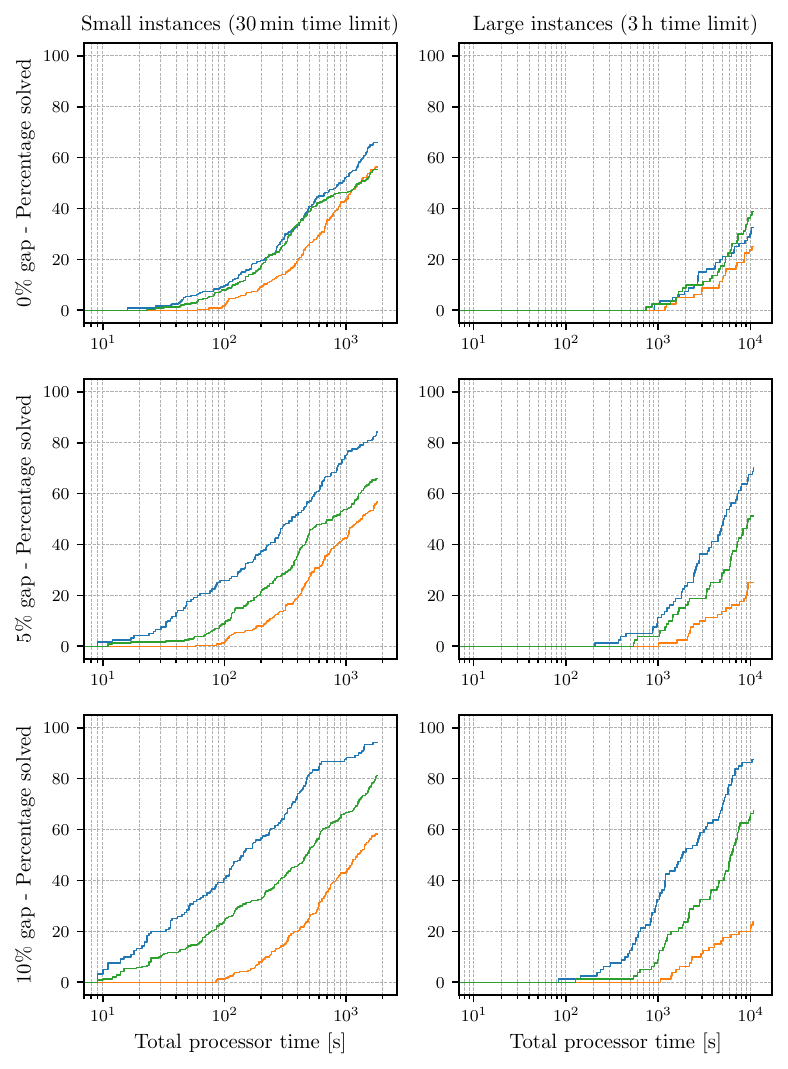}
\caption{Performance plots of our ASBP with $\mu=0.5$ in blue, the ISAM \cite{toenissen11} in orange, and the SRP \cite{RODRIGUES2021499} in green for the BACASP.}
 \label{fig:baq}
\end{figure}

\subsection{Comparison of ASBP variants}\label{sec:variants}

We \revv{now perform an ablation study and} analyze the impact of the different techniques employed in ASBP by comparing its performance to several variants in which one of these techniques is disabled in each case. Here, we exclusively used the RCLRP since its hard second stage offers more potential for the different techniques to have an impact on the overall performance of the method. To obtain an insightful comparison, we used a broad range of instance sizes with 5~warehouses, \{12, 18, 22, 24\} customers, and \{16, 64\} scenarios. As in the previous section, we generated 10~instances for each combination of parameters, and solved two repetitions of each instance. We used a time limit of 100~minutes for each instance, and target gaps of \{0\%, 5\%, 10\%\}.

\medskip

Table~\ref{tab:lrp-table} compares the average runtime and the number of runs solved within the 100-minute time limit for the following variants of the ASBP:
\begin{itemize}
    \item \textbf{ASBP:} Standard version of the ASBP (Algorithm~\ref{alg:general} with Algorithm~\ref{alg:ours}).
    \item \textbf{no LB:} The ASBP without using lower bounds for second-stage problems. Here, we let~$\LB^s = 0$ throughout the algorithm for all~$s\in S$ whose second-stage problem is not solved to optimality yet, and we only set~$\LB^s$ to the optimal objective value of~$Q(\tilde{x},s)$ in Line~\ref{algline:oheuristicstep} or Line~\ref{algline:oupdateUBLB} once~$Q(\tilde{x},s)$ has been optimally solved. Therefore, scenarios can only be excluded from further consideration in Line~\ref{algline:o9} due to their upper bound being smaller than the lower bound of an already optimally solved scenario (or smaller than or equal to~$\tilde z'$).
    \item \textbf{no ZB:} The ASBP without comparisons of the upper bounds of the scenarios to~$\tilde{z}'$. Here, we replaced the definition of~$\tilde{z}'$ in Line~\ref{algline:odefz} in our algorithm by $\tilde{z}' \gets 0$ and considered all scenarios in~$S$ during the search for a bad scenario (as the scenarios~$s\in D$ are not automatically excluded from the start due to their optimal second-stage cost being at most~$\tilde{z}$).
    \item \textbf{no TL:} The ASBP without time limits for the second-stage problems. Here, we replaced the initialization of the time limits~$\rho^s$ by $\rho^s \gets +\infty$. In particular, this means that Algorithm~\ref{alg:ours} only terminates if an actually worst scenario has been found or is already contained in~$D$.
\end{itemize}

\begin{center}
\begin{table}[h]
    \centering

    \begin{adjustbox}{width=\textwidth}
\newcolumntype{R}[1]{>{\raggedleft\arraybackslash}m{#1}}
\begin{tabular}{rrr|*{4}{R{18mm}}|*{4}{R{15mm}}}
\toprule
     &    &    &  \multicolumn{4}{c}{Average total runtime} & \multicolumn{4}{|c}{Number of solved runs within time limit} \\
Gap & Cust. & Sc. & ASBP &                   no LB &                   no ZB &                   no TL & ASBP &               no LB &              no ZB &          no TL \\
\midrule
0.00 & 12 & 16 &   134 &    137 &    171 &    150 &    20 &     20 &     20 &     20 \\
     &    & 64 &   866 &    871 &    903 &    760 &    18 &     18 &     18 &     18 \\
     & 18 & 16 &  4385 &   4048 &   4257 &   3258 &     8 &      9 &      8 &     10 \\
     &    & 64 &  6000 &   6000 &   6000 &   5511 &     0 &      0 &      0 &      2 \\
     & 22 & 16 &  5232 &   5228 &   5308 &   5160 &     4 &      4 &      4 &      4 \\
     &    & 64 &  6000 &   6000 &   6000 &   6000 &     0 &      0 &      0 &      0 \\
     & 24 & 16 &  4808 &   5133 &   5153 &   4975 &     7 &      5 &      6 &      8 \\
     &    & 64 &  5703 &   5761 &   5798 &   5744 &     2 &      2 &      3 &      3 \\
     \hline &&&&&&&&&& \\[-2ex]
     \multicolumn{3}{c|}{Average 0\% gap}     &  4141 &   4147 &   4199 &   3945 &   7.4 &    7.2 &    7.4 &    8.1 \\
     \midrule
0.05 & 12 & 16 &    53 &     59 &     82 &     65 &    20 &     20 &     20 &     20 \\
     &    & 64 &   187 &    191 &    276 &    139 &    20 &     20 &     20 &     20 \\
     & 18 & 16 &   289 &    291 &   1580 &    487 &    20 &     20 &     18 &     20 \\
     &    & 64 &   419 &    378 &   2656 &    811 &    20 &     20 &     20 &     20 \\
     & 22 & 16 &  1650 &   1668 &   3491 &   3255 &    16 &     16 &     14 &     16 \\
     &    & 64 &  2134 &   2360 &   5292 &   4032 &    18 &     18 &     10 &     15 \\
     & 24 & 16 &  1627 &   1580 &   4148 &   3083 &    18 &     18 &     12 &     16 \\
     &    & 64 &  2707 &   2479 &   4986 &   5217 &    17 &     18 &      8 &      6 \\
     \hline &&&&&&&&&& \\[-2ex]
     \multicolumn{3}{c|}{Average 5\% gap}    &  1133 &   1126 &   2814 &   2136 &  18.6 &   18.8 &   15.2 &   16.6 \\
     \midrule
0.10 & 12 & 16 &    27 &     29 &     54 &     40 &    20 &     20 &     20 &     20 \\
     &    & 64 &    81 &     84 &    200 &     99 &    20 &     20 &     20 &     20 \\
     & 18 & 16 &   124 &    103 &    975 &    413 &    20 &     20 &     20 &     20 \\
     &    & 64 &   199 &    251 &   2648 &    766 &    20 &     20 &     19 &     20 \\
     & 22 & 16 &   825 &   1453 &   2988 &   3004 &    18 &     16 &     16 &     18 \\
     &    & 64 &  1318 &   1326 &   4230 &   3582 &    18 &     18 &     14 &     18 \\
     & 24 & 16 &   808 &    847 &   4129 &   2910 &    18 &     18 &     13 &     16 \\
     &    & 64 &  1134 &   1141 &   4673 &   4990 &    18 &     18 &     12 &      7 \\
     \hline &&&&&&&&&& \\[-2ex]
     \multicolumn{3}{c|}{Average 10\% gap}     &   565 &    654 &   2487 &   1976 &  19.0 &   18.8 &   16.8 &   17.4 \\
     \midrule
     \multicolumn{3}{c|}{Overall average} &  1946 &   1976 &   3167 &   2685 &  15.0 &   14.9 &   13.1 &   14.0 \\
\bottomrule
\end{tabular}
\end{adjustbox}

    \caption{Comparison of ASBP \rev{(with $\mu=0.5$)} variants for the RCLRP.
    Each provided average total runtime is the average over 10~instances and 2~repetitions per instance (i.e., 20~data points are used for each reported average). The time limit is set to 100~minutes. \rev{All time measurements are given in seconds.}}
    \label{tab:lrp-table}
\end{table}
\end{center}

The standard version of the ASBP achieved the lowest average runtime of 1946 seconds and solved 15.0 out of 20 runs within the time limit on average. The variants \emph{no ZB} and \emph{no TL} performed much worse and required 3167 seconds and 2685 seconds on average and only solved an average of 13.1 and 14.0 runs, respectively. In contrast, the variant \emph{no LB} performed only slightly worse than the standard version of the ASBP with an average runtime of 1976 seconds and an average of 14.9 runs solved within the time limit.

\medskip

Comparing the results for individual choices of an instance size and a target gap shows that, while the standard ASBP performed best in most cases, there are some cases where other variants showed a slightly better performance. This better performance of other variants in isolated cases can mostly be attributed to the non-deterministic nature of the employed MIP solver Gurobi. In particular, the quality of the solutions and lower bounds provided by the \rev{0.1-second} run of Gurobi used as the heuristic in Line~\ref{algline:oheuristicstep} can vary substantially between runs, which can affect the overall runtime of the algorithm significantly since it may change the order in which the second-stage problems are considered afterwards.

\medskip

Overall, it can be observed that the use of time limits and the comparisons of upper bounds of the scenarios to~$\tilde{z}'$ had a larger impact on the runtime than excluding scenarios using their lower bounds. This can partly be explained by the observation that, in many cases, the time limit for a currently worst scenario was exhausted before its lower bound could exclude all other scenarios. Moreover, as is to be expected, comparisons of upper bounds of the scenarios to~$\tilde{z}'$, which, in particular, exploit a non-zero target gap, had larger influence for larger values of the target gap.

\section{Conclusion}

Two-stage and recoverable robust optimization problems provide flexible modeling opportunities, but at the same time, are computationally very challenging to solve. A widely applied principle in this context is that of column-and-constraint generation, where one starts with a simplification of the problem and iteratively makes it more complex until an optimal solution for the original problem has been achieved. In this paper, we introduced, analyzed, and tested several techniques to improve this process. Our resulting method is particularly designed for the case that the second-stage problem is difficult to solve, which means that already identifying a worst-case scenario becomes a computational burden.
We introduced our method in the context of (linear) mixed-integer programs, but it can be directly extended to non-linear problems by adjusting our assumptions. By comparing to other state-of-the-art column-and-constraint generation methods from the literature, we were able to show that our approach is particularly strong if the second-stage problem is indeed hard to solve, but we still remain competitive to the best known methods even if this is not the case.

\revv{An assumption we made is that feasible solutions for the second-stage problem can be found easily by means of heuristics or an MIP solver.}
In possible further research, it would be interesting to consider column-and-constraint generation methods that still work if finding a feasible solution in the second stage is already hard. An interesting further challenge is to include machine learning methods within our scenario addition framework to achieve further speed-ups if data is available on the performance of our method on previous runs (an assumption that we did not need to make in this paper).


\section*{Statements and declarations}

\subsection*{Funding}
\noindent
This work was supported by the German Federal Ministry of Education and Research (BMBF) [grant number~05M22WTA].

\subsection*{Author contributions}
{\parindent0pt
\textbf{Marc Goerigk:} Methodology, Validation, Formal analysis, Writing - Review \& Editing

\textbf{Johannes Kager:} Methodology, Software, Validation, Formal analysis, Investigation, Data Curation, Writing - Original Draft, Writing - Review \& Editing, Visualization.

\textbf{Dorothee Henke:} Methodology, Validation, Formal analysis, Writing - Review \& Editing

\textbf{Fabian Schäfer:} Methodology, Validation, Formal analysis, Writing - Review \& Editing

\textbf{Clemens Thielen:} Conceptualization, Methodology, Validation, Formal analysis, Writing - Review \& Editing, Supervision, Project Administration, Funding acquisition.
}

\subsection*{Declaration of interest}
\noindent
Declarations of interest: none

\subsection*{Acknowledgments}
\noindent
We gratefully acknowledge Filipe Rodrigues for generously providing us with the data and code from their work \cite{RODRIGUES2021499}, and Eliana M.\ Toro for generously providing us with the data from their work \cite{toro}.

\subsection*{Data availability}
\noindent
The implementations of the algorithms and models as well as the data sets used in the computational experiments are published at \url{https://doi.org/10.5281/zenodo.17363541}.


\newcommand{\etalchar}[1]{$^{#1}$}

\if\arxivVersion1
 \newpage
 \appendix
 \section{ISAM and SRP with non-zero target gap}\label{sec:appendix-1-proofs}

We provide a short proof of correctness for the ISAM from~\cite{toenissen11} and the SRP from~\cite{RODRIGUES2021499} if these are implemented with a non-zero target gap~$P$ in our general \CCG{}-method structure (Algorithm~1), which will allow us to compare our algorithm to these methods also for non-zero target gaps. \rev{Note that, without loss of generality, we assume a master-gap factor of~$\mu=1$ in the ISAM and the SRP since using $\mu<1$ has exactly the same effect in these methods as decreasing the target gap~$P$ itself to~$\mu P$.}

\setcounter{theorem}{6}
\begin{proposition}\label{prop:nonzeroinprevious}
  Suppose that Algorithm~1 is applied with any target gap~$P\in[0,1)$  \rev{and master-gap factor~$\mu = 1$,} and \FBS() is implemented by either Algorithm~2 or Algorithm~3. Then the first-stage solution~$\tilde{x}$ returned by Algorithm~1 is a solution of~\textup{(2-RO)} with gap~$P$.
\end{proposition}
\begin{proof}
Algorithm~1 terminates if and only if the scenario returned by \linebreak\FBS() is contained in~$D$. We now show that, when Algorithm~2 or Algorithm~3 returns a scenario~$k \in D$, we have $Q(\tilde x, s) \le \tilde z$ for all~$s \in S$. The claim then follows directly from Proposition~2.

First consider Algorithm~2.
Since Algorithm~2 returns a scenario~$k \in \argmax_{s\in S} \UB^s$ whose second-stage problem has been solved to optimality, we have $\UB^k = Q(\tilde x, k)$.
This means that $Q(\tilde{x}, s) \le \UB^s \le \UB^k = Q(\tilde{x}, k)$ holds for all~$s \in S$.
If~$k\in D$, then $Q(\tilde{x}, k) \le \tilde{z}$ holds, so we obtain $Q(\tilde x, s) \le \tilde z$ for all~$s \in S$ as required.

Now consider Algorithm~3.
The algorithm only returns a scenario in~$D$ if no scenario~$s \in S\setminus D$ with $Q(\tilde{x}, s) > \tilde{z}$ is found, i.e., if $Q(\tilde{x}, s) \le \tilde{z}$ for all~$s \in S\setminus D$. Since $Q(\tilde{x}, s) \le \tilde{z}$ holds for all~$s\in D$, this implies that $Q(\tilde x, s) \le \tilde{z}$ for all~$s \in S$ in this case.
\end{proof}

 \section{RCLRP Model Formulation}\label{sec:appendix-2-LRP}


To model the RCLRP formally, we extend the (non-robust) CLRP formulation from~\cite{toro} to a two-stage robust model. The underlying structure is a complete, directed graph whose node set~$V=I\cup J$ is partitioned into a set~$I$ of potential warehouses~(WH) and a set~$J$ of customers. Each directed arc~$(v_1,v_2)$ is equipped with a non-negative traversal cost~$c_{v_1,v_2}$ and two non-negative values~$\alpha_{v_1,v_2}$ and~$\gamma_{v_1,v_2}$. The value $\alpha_{v_1,v_2}$ represents the cost of emissions produced by an empty vehicle traversing the arc~$(v_1,v_2)$, while $\gamma_{v_1,v_2}$ is a conversion factor that describes the arc-dependent additional cost of emissions per ton of cargo that a traversing vehicle is loaded with.
Furthermore, each potential warehouse~$i\in I$ has non-negative fixed costs~$e_i$ and~$e'_i>e_i$ for opening it in the first and in the second stage, respectively, as well as non-negative variable costs~$d_i$ and~$d'_i>d_i$ describing the cost per unit of warehouse size established in the first and in the second stage. The maximum possible capacity of warehouse~$i$ is given by~$A_i\geq 0$, and~$L\geq 0$ and~$F\geq 0$ denote the vehicle capacity and the fixed cost per used vehicle (i.e., per vehicle tour), respectively.

Variables with superscript~$0$ denote first-stage decisions, while variables with superscript~$s$ denote second-stage decisions for a scenario~$s \in S$. Each scenario is determined by the corresponding demand vector~$\beta^s \in \R^{J}_{\ge 0}$ that specifies a non-negative demand~$\beta^s_j\leq L$ for each customer~$j\in J$. The problem parameters and decision variables are given as follows:

\medskip

\noindent
\textbf{Parameters:}\\
\noindent
\def\arraystretch{1.4}
\begin{tabular}{wc{1cm}|l}
	$I$ & set of potential WHs \\
	$J$ & set of customers \\
	$V$ & set of nodes ($V=I\cup J$) \\
	$S$ & finite set of scenarios \\
	$\beta^s_j$ & demand of customer~$j$ in scenario~$s$ \\
	$e_i, e'_i$ & fixed cost for opening WH~$i$ in first or second stage \\
        $d_i, d'_i$ & cost per unit of size of WH~$i$ in first or second stage \\
        $c_{v_1,v_2}$ & traversal cost of arc~$(v_1,v_2)$ for $v_1, v_2 \in V$ \\
        $\alpha_{v_1,v_2}$ & cost of emissions of empty vehicle on arc~$(v_1, v_2)$ for $v_1, v_2 \in V$ \\
        $\gamma_{v_1,v_2}$ & additional cost of emissions per ton cargo on arc~$(v_1, v_2)$ for $v_1, v_2 \in V$ \\
        $A_i$ & maximum possible capacity of WH~$i$\\
	$L$ & vehicle capacity \\
	$F$ & fixed cost per used vehicle (i.e., per vehicle tour)
\end{tabular}

\bigskip

\noindent
\textbf{First-stage decision variables:}

\noindent
\begin{tabular}{wc{0.18\linewidth}|l}
	$w^0_i \in \{0,1\}$ & $1$ if WH~$i$ is opened in the first stage, $0$ otherwise\\
	$a_i^0 \in \R_{\ge 0}$ & chosen size of WH~$i$ in the first stage\\
\end{tabular}

\bigskip

\noindent
\textbf{Second-stage decision variables for scenario~$s \in S$:}

\noindent
\begin{tabular}{wc{0.18\linewidth}|l}
	$w^s_i \in \{0,1\}$ & $1$ if WH~$i$ is open in the second stage, $0$ otherwise\\
	$a_i^s \in \R_{\ge 0}$ & chosen size of WH~$i$ in the second stage\\
	$r_{v_1,v_2}^s \in \{0,1\}$ & $1$ if arc $(v_1, v_2)$ for~$v_1, v_2 \in V$ is used, $0$ otherwise\\
	$t_{v_1,v_2}^s \in \R_{\ge 0}$ & tons of cargo transported on arc~$(v_1, v_2)$ for~$v_1, v_2 \in V$\\
	$u_{i,j}^s \in \{0,1\}$ & $1$ if customer~$j$ is served from WH~$i$, $0$ otherwise
\end{tabular}
\def\arraystretch{1}

\medskip

Whenever we write one of the variables without an index, we mean the vector of all corresponding variables, e.g., $w^0\coloneqq (w_i^0)_{i\in I}$ and $r^s=(r^s_{v_1,v_2})_{v_1,v_2\in V}$.
In the general problem formulation~(2-RO), the vector of first-stage decision variables is then $x=(w^0, a^0)$ and the vector of second-stage decision variables for scenario~$s\in S$ is $y^s=(w^s, a^s, r^s, t^s, u^s)$.

\medskip

The value of the first-stage objective~$f$ is given by the sum of the fixed and the size-dependent costs of opening warehouses in the first stage:
\begin{align*}
    f(w^0,  a^0) &\coloneqq \sum_{i \in I} e_i\cdot w^0_i +  \sum_{i \in I} d_i \cdot a_i^0.
\end{align*}

Recoverability is modeled by the second-stage decision variables~$w^s_i$ and~$a^s_i$ for each warehouse~$i\in I$ in each scenario~$s\in S$. These reflect the actual choice of opened warehouses and warehouse sizes to be implemented if scenario~$s$ is realized. Note that we do not allow warehouses to be closed or decreased in size in the second stage. This is a reasonable assumption in supply chain modeling since downsizing or closing warehouses mid-horizon generally leads to high operational and transition costs without clear financial benefits~\cite{hubner2013demand}.
If warehouse~$i$ is not opened in the first stage ($w^0_i = 0$) but is opened in the second stage when scenario~$s$ realizes ($w^s_i = 1$), a penalty of $e'_i - e_i>0$ has to be paid in addition to the opening cost. Similarly, for the size-dependent cost, if the size of warehouse~$i$ is increased in the second stage in scenario~$s$ (i.e., if $a_i^s-a_i^0 > 0$), a penalty of $d'_i - d_i>0$ has to be paid per unit of size in addition to the cost in the first stage.

\noindent
The second-stage objective~$g^s$ for a scenario~$s \in S$ is given as:
\begin{align}
	g^s(w^0,  a^0, w^s, a^s, r^s, t^s, u^s)
	\;\coloneqq\; &
	\sum_{i  \in I} e'_i\cdot (w^s_i-w^0_i) + \sum_{i \in I} d'_i\cdot (a_i^s - a_i^0)  \label{eq:ssorec}
	\\ & + \sum_{v_1,v_2\in V} c_{v_1,v_2}\cdot r^s_{v_1,v_2}   \label{eq:ssodist}
	\\ & + \sum_{v_1,v_2\in V} \alpha_{v_1,v_2}\cdot r^s_{v_1,v_2} \refstepcounter{equation}\subeqn \label{eq:ssoem1}
	\\ & + \sum_{v_1,v_2\in V} \gamma_{v_1,v_2}\cdot t^s_{v_1,v_2}  \subeqn \label{eq:ssoem2}
	\\ & + \sum_{i\in I, j\in J} F\cdot r^s_{i,j}. \label{eq:ssoroute}
\end{align}

The first term~\eqref{eq:ssorec} describes the recovery costs resulting from opening additional warehouses and increasing warehouse sizes, and~\eqref{eq:ssodist} corresponds to the travel costs of the tours. The terms~\eqref{eq:ssoem1} and~\eqref{eq:ssoem2} represent the cost of emissions of the vehicles and of the cargo load, respectively, while~\eqref{eq:ssoroute} accounts for the fixed cost of the used vehicles, which are obtained by multiplying the fixed cost~$F$ per vehicle by the number of used arcs leaving the warehouses (i.e., by the number of tours).

\noindent
The feasible set of the first-stage variables is given as
\begin{align*}
\mathcal{X} \coloneqq \{(w^0,  a^0) \in \{0,1\}^I \times \R_{\ge 0}^I : a_i^0 \le A_i \cdot w^0_i \text{ for all } i \in I\},
\end{align*}
which forces the first-stage size of a warehouse to zero if the warehouse is not opened in the first stage, and upper bounds it by the corresponding maximum warehouse capacity if it is opened in the first stage. Given a scenario~$s \in S$ and a first-stage solution~$x=(w^0, a^0)$, the feasible set~$\mathcal{Y}^s(x)$ of the second-stage variables is described by the following constraints:

\begingroup
\allowdisplaybreaks
\addtolength{\jot}{0.5em}
\begin{align}
	& \sum_{v\in V \setminus \{j\}} r^s_{v,j} = \sum_{v\in V \setminus \{j\}} r^s_{j,v} = 1, && \forall j \in J: \beta^s_j > 0 \label{eq:m-5}\\
        & \sum_{v\in V \setminus \{j\}} r^s_{v,j} = \sum_{v\in V \setminus \{j\}} r^s_{j,v} = 0, && \forall j \in J: \beta^s_j = 0 \label{eq:m-5b}\\
	& \sum_{j\in J} r^s_{i,j} = \sum_{j\in J} r^s_{j,i},  && \forall i \in I \label{eq:m-6}\\
	& \sum_{v\in V \setminus \{j\}} t^s_{v,j} = \sum_{v\in V \setminus \{j\}} t^s_{j,v} + \beta^s_j &&\forall j \in J, \label{eq:m-7}\\
	& t^s_{v_1,v_2} \le L \cdot r^s_{v_1,v_2},&&\forall v_1,v_2 \in V \label{eq:m-8}\\
	& \sum_{j\in J} t^s_{i,j} \le a^s_i ,  && \forall i\in I \label{eq:m-9}\\
	& a^0_i \le a^s_i \le A_i \cdot w^s_i, && \forall i\in I \label{eq:m-10}\\
	& w^0_i \le w^s_i, && \forall i\in I \label{eq:m-11}\\
	& u^s_{i,j_1} - u^s_{i,j_2} \le 1 - r^s_{j_1,j_2} - r^s_{j_2,j_1},  && \forall i\in I, j_1,j_2 \in J, \; j_1 \ne j_2 \label{eq:m-12} \\
	& u^s_{i,j} \ge r^s_{i,j}, && \forall i\in I, j\in J \label{eq:m-13}\\
	& u^s_{i,j} \ge r^s_{j,i}, && \forall i\in I, j\in J \label{eq:m-14}\\
	& \sum_{i\in I} u^s_{i,j} = 1, && \forall j \in J: \beta^s_j > 0 \label{eq:m-15}\\
        & w^s\in\{0,1\}^{I}, \;a^s\in\R_{\ge 0}^{I}\\
        & \rlap{ $r^s\in\{0,1\}^{V \times V}, \;t^s\in\R_{\ge 0}^{V \times V}, \;u^s\in\{0,1\}^{I \times J}$. } \label{eq:m-end}
\end{align}
\endgroup

Constraints~\eqref{eq:m-5}--\eqref{eq:m-6} fix the routing in the network by setting the number of incoming and outgoing used arcs of each active customer node to~1, and guaranteeing that each warehouse has as many outgoing as incoming used arcs. \rev{Customers with zero demand should not be visited by any route.} Constraints~\eqref{eq:m-7}--\eqref{eq:m-9} describe the flow conservation in the network and fix the flow on inactive arcs to~$0$. In~\eqref{eq:m-10}, we consider the capacity constraints of warehouses and ensure that warehouse sizes cannot be decreased in the second stage. Similarly, by~\eqref{eq:m-11}, warehouses opened in the first stage cannot be closed anymore. The last four constraints are needed to restrict the routing to closed tours, i.e., each tour starts and ends at the same warehouse.
This is modeled using the auxiliary variables~$u^s$ that describe which warehouse serves which customers.
Constraint~\eqref{eq:m-15} ensures that each customer with non-zero demand is served from exactly one warehouse.
If two customer nodes are connected, they have to be served from the same warehouse by~\eqref{eq:m-12}.
Together with~\eqref{eq:m-13} and~\eqref{eq:m-14}, this ensures that the first and the last arc of a tour are connected to the same warehouse.

\medskip
\noindent
The full model, reformulated as a minimization problem, then reads:

\begin{align}\tag{RCLRP}
    \begin{aligned}
        \min \quad&f(w^0,  a^0) + z \\[1em]
        \text{s.t.}\quad&a_i^0 \le A_i w^0_i \quad && \quad\forall i \in I\\[0.8em]
        &z \ge g^s(w^0,  a^0, w^s, a^s, r^s, t^s, u^s) && \quad\forall s \in S \\[0.8em]
        &\eqref{eq:m-5} - \eqref{eq:m-end} && \quad\forall s \in S\\[0.8em]
        & z \in \R, \;w^0 \in \{0,1\}^I, \;a^0 \in \mathbb{R}_{\geq 0}^I.
    \end{aligned}
\end{align}
\vspace{1em}

 \section{BACASP Model Formulation}\label{sec:appendix-3-BACASP}

 For completeness, we state the full BACASP model from~\cite{RODRIGUES2021499} here. For a more detailed description, we refer to the original paper~\cite{RODRIGUES2021499}. An instance of BACASP is given by a set $V = \{1, \dots, N\}$ of $N$~vessels, a set $T = \{1, \dots, M\}$ of $M$~time periods, and a set $G = \{1, \dots, C\}$ of $C$~cranes. Further, the wharf is divided into $J+1$ berth sections, numbered with indices in $B = \{0, \dots, J\}$. The required parameters and variables are given as follows:

\bigskip
\noindent
\textbf{Parameters:}

\noindent
\def\arraystretch{1.2}
\begin{tabular}{wc{0.18\linewidth}|p{0.76\linewidth}}
	$H_k$ & length of vessel~$k$ (number of covered berth sections) \\
	$Q_k$ & cargo volume loaded on vessel~$k$ \\
	$A^s$ & arrival times of vessels in scenario $s$ \\
	$NC_k$ & maximum number of cranes working simultaneously on vessel~$k$ \\
	$F$ & safety time between vessel departures and berthing \\
	$S_g$ & crane $g$ can operate starting at berth section~$S_g$ \\
        $E_g$ & crane~$g$ can operate up to berth section~$E_g$ \\
	$P_g$ & processing rate of crane~$g$ \\
\end{tabular}
\def\arraystretch{1}

\bigskip

\noindent
\textbf{First-stage decision variables:}

\noindent
\def\arraystretch{1.2}
\begin{tabular}{wc{0.18\linewidth}|p{0.76\linewidth}}
    $e_{k,\ell} \in \{0,1\}$ & if $e_{k,\ell}$ is equal to~$1$, then vessel~$\ell$ starts to be served after vessel~$k$ departs \\
	$u_{k,\ell} \in \{0,1\}$ & $1$ if vessel~$\ell$ berths completely below the berth position of vessel~$k$, $0$~otherwise \\
	$b_k \in B$ & berthing position of vessel~$k$ \\
	$\pi_{k,n} \in \{0,1\}$ & $1$ if vessel~$k$ starts at berth position~$n$, $0$~otherwise \\
	$\sigma_{k,n} \in \{0,1\}$ & $1$ if berth section~$n$ is assigned to vessel~$k$, $0$~otherwise
\end{tabular}
\def\arraystretch{1}

\bigskip
\newpage
\noindent
\textbf{Second-stage decision variables for scenario~$s \in S$:}

\noindent
\begin{tabular}{wc{0.18\linewidth}|p{0.76\linewidth}}
    $d^s_{g,k,j} \in \{0,1\}$ & $1$ if crane~$g$ is assigned to vessel~$k$ in period~$j$ in scenario~${s \in S}$, $0$~otherwise \\
	$t^s_k \in T$ & berthing time of vessel~$k$ in scenario~$s \in S$ \\
	$c^s_k \in T$ & departure time of vessel~$k$ in scenario~$s \in S$ \\
	$\alpha^s_{k,j} \in \{0,1\}$ & $1$ if vessel~$k$ starts operating in period~$j$ in scenario~${s \in S}$, $0$~otherwise \\
	$\beta^s_{k,j} \in \{0,1\}$ & $1$ if vessel~$k$ is operating in period~$j$ in scenario~$s \in S$, $0$~otherwise \\
	$\gamma^s_{k,j} \in \{0,1\}$ & $1$ if the last operating period of vessel~$k$ is~$j$ in scenario~${s \in S}$, $0$~otherwise
\end{tabular}

\bigskip
As before, whenever we write one of the variables without an index, we mean the vector of all corresponding variables, e.g., $e\coloneqq (e_{k,\ell})_{k,\ell\in V}$ or $c^s \coloneqq (c^s)_{k\in V}$. The vector $x$ of first-stage decision variables is then $x=(e,u,b,\pi,\sigma)$ and the vector $y^s$ of second-stage decision variables for scenario~$s\in S$ is $y^s=(d^s,t^s,c^s,\alpha^s,\beta^s,\gamma^s)$.

\bigskip

\noindent
The first-stage objective is given as $f(e,u,b,\pi,\sigma)\coloneqq 0$ and the second-stage objective for a scenario~$s \in S$ is given as follows (for simplicity, we only list the relevant variables as arguments):
\begin{align}
	g^s(c^s)
	\coloneqq \sum_{k \in V} (c^s_k - A^s_k).
\end{align}

\noindent
The first-stage constraints are:
\allowdisplaybreaks
\begin{align}
	&e_{\ell,k} + e_{k,\ell} + u_{\ell,k} + u_{k,\ell} = 1, && \forall k, \ell \in V, \, k < \ell \label{eq:baqstart1} \\
	&b_k \leq J - H_k + 1, && \forall k \in V \\
	&b_k \geq b_{\ell} + H_{\ell} + (u_{k,\ell} - 1)\cdot (J+1), && \forall k, \ell \in V, \, k \neq \ell \\
	&b_k \le b_\ell+H_\ell-1+u_{k,\ell}\cdot J, && \forall k, \ell \in V, \, k \ne \ell \\
	&b_k \in \N_0, && \forall k \in V \\
	&e_{k,\ell}, u_{k,\ell} \in \{0,1\}, && \forall k \in V, j \in T \\
	&b_k = \sum_{n \in B} n \cdot \pi_{k,n}, && \forall k \in V \\
	&b_k, t_k, c_k \in \mathbb{Z}_0^+, && \forall k \in V \\
	&\sum_{n \in B} \sigma_{k,n} = H_k, && \forall k \in V \\
	&\sum_{n \in B} \pi_{k,n} = 1, && \forall k \in V \\
	&\pi_{k,n} \geq \sigma_{k,n} - \sigma_{k,n-1}, && \forall k \in V, \, n \in B, \, n > 0 \\
	&\pi_{k,0} \geq \sigma_{k,0}, && \forall k \in V \\
	&\pi_{k,n} \leq \sigma_{k,n}, && \forall k \in V, \, n \in B \\
	&\pi_{k,n} \leq 1 - \sigma_{k,n-1}, && \forall k \in V, \, n \in B, \, n > 0 \\
	&u_{k,\ell} + \sum^J_{m=\max\{n - H_{\ell} + 1, 0\}} \pi_{\ell,m} + \pi_{k,n} \leq 2, && \forall k, \ell \in V, \, k \neq \ell, \, n \in B \\
        & e_{k,l}, u_{k,l}, b_k, \pi_{k,n}, \sigma_{k,n} && \forall k, l \in V, n \in B. \label{eq:baqend1}
\end{align}

\bigskip
\noindent
Given a scenario~$s \in S$, and a first-stage solution~$(e,u,b,\pi,\sigma)$, the constraints of the second stage are:
\allowdisplaybreaks
\begin{align}
	&t^s_\ell \geq c^s_k + F + (e_{k,\ell} - 1)\cdot (M+F), && \forall k, \ell \in V, \, k \neq \ell \label{eq:baqstart2}\\
	&t^s_k \geq A^s_k, && \forall k \in V \\
	&\sum_{k\in V} d^s_{g,k,j} \leq 1, && \forall j \in T, \, g \in G \\
	&t^s_k \leq j\cdot d^s_{g,k,j} + (1 - d^s_{g,k,j})\cdot M, && \forall j \in T, \, k \in V, \, g \in G \\
	&c^s_k \geq (j + 1)\cdot  d^s_{g,k,j}, && \forall j \in T, \, k \in V, \, g \in G \\
	&\sum_{j \in T} \sum_{g \in G} P_g \cdot d^s_{g,k,j} \geq Q_k, && \forall k \in V \\
	&b_k + H_k \leq E_g \cdot d^s_{g,k,j} + (1 - d^s_{g,k,j})\cdot (J+1), && \forall j \in T, \, k \in V, \, g \in G \\
	&b_k \geq S_g \cdot d^s_{g,k,j}, && \forall j \in T, \, k \in V, \, g \in G \\
	&d^s_{g,k,j} + d^s_{g',\ell,j} \leq 2 - u_{k,\ell},  && \hspace*{-22mm}\forall j \in T, \, k, \ell \in V,\,g, g' \in G, \, g' < g \\
        &\sum_{g \in G} d^s_{g,k,j} \leq NC_k, && \forall k \in V, \, j \in T \\
	&b_k, t^s_k, c^s_k \in \N_0, && \forall k \in V \\
	&d^s_{g,k,j} \leq \sum^{E_g - H_k}_{n=S_g} \pi_{k,n}, && \forall k \in V, \, g \in G, \, j \in T \\
	&t^s_k = \sum_{j \in T} j \cdot \alpha^s_{k,j}, && \forall k \in V \\
	&c^s_k \geq (j + 1) \cdot \beta^s_{k,j}, && \forall k \in V, \, j \in T \\
	&d^s_{g,k,j} \leq \beta^s_{k,j}, && \forall k \in V, \, j \in T, \, g \in G \\
	&\sum_{j\in T} \alpha^s_{k,j} = 1, && \forall k \in V \\
	&\alpha^s_{k,j} \geq \beta^s_{k,j} - \beta^s_{k, j-1}, && \forall k \in V, \, j \in T, \, j > 1 \\
	&\alpha^s_{k,1} \geq \beta^s_{k,1}, && \forall k \in V, \\
	&\alpha^s_{k,j} \leq \beta^s_{k,j}, && \forall k \in V, \, j \in T \\
	&\alpha^s_{k,j} \leq 1 - \beta^s_{k,j-1}, && \forall k \in V, \, j \in T, \, j > 1 \\
	&e_{k,\ell} + \beta^s_{k,i} + \alpha^s_{\ell,j} \leq 2, && \hspace*{-25mm}\forall k, \ell  \in V, \, k \neq \ell , \, j, i \in T, \, i \geq j - F \\
	&\gamma^s_{k,j} \geq \beta^s_{k,j} - \beta^s_{k,j+1}, && \forall k \in V, \, j \in T, \, j < M \\
	&\gamma^s_{k,M} \geq \beta^s_{k,M}, && \forall k \in V, \\
	&\gamma^s_{k,j} \leq \beta^s_{k,j}, && \forall k \in V, \, j \in T \\
	&\gamma^s_{k,j} \leq 1 - \beta^s_{k,j+1}, && \forall k \in V, \, j \in T, \, j < M \\
	&\sum_{j \in T} \gamma^s_{k,j} = 1, && \forall k \in V \\
	&d^s_{g,k,j}, \alpha^s_{k,j}, \beta^s_{k,j}, \gamma^s_{k,j} \in \{0, 1\}, && \forall g \in G, \, k \in V, \, j \in T. \label{eq:baqend2}
\end{align}

\medskip
\noindent
The full BACASP model then reads:
\begin{align}\label{eq:bacasp}\tag{BACASP}
\begin{aligned}
	\begin{aligned}
		\min \quad&z \\
		\text{s.t.}\quad &\eqref{eq:baqstart1} - \eqref{eq:baqend1}  \\
		&z \ge g^s(c^s) && \quad\forall s \in S \\
		&\eqref{eq:baqstart2} - \eqref{eq:baqend2} && \quad\forall s \in S.
	\end{aligned}
\end{aligned}
\end{align}

 \section{Additional experiments}\label{sec:appendix-4-results}

\revv{In this section, we present additional experiments comparing different master-gap factors (\ref{sec:appendix-mgap}) and implementations of \Init() in our ASBP (\ref{sec:appendix-init}), as well as experiments assessing the effect of disabling the heuristic (\ref{sec:appendix-noheur}).}

\subsection{Comparison of different master-gap factors}\label{sec:appendix-mgap}
\rev{
In addition to the master-gap factor~$\mu=0.5$ that was used throughout the experiments in Section~6 of the main paper, we tested several other values for the master-gap factor~$\mu$ \rev{in the ASBP}. For both considered applications (RCLRP and BACASP), we evaluated the performance on small instances. The results are reported in Figure~\ref{fig:mgapfactor} for the RCLRP and the BACASP \revv{and target gaps~$P$ of $\{5\%,\, 10\%\}$.\footnote{No experiments were conducted for a target gap of zero since the master-gap factor~$\mu$ is irrelevant in this case.}}
}

\begin{figure}[h]
\includegraphics{./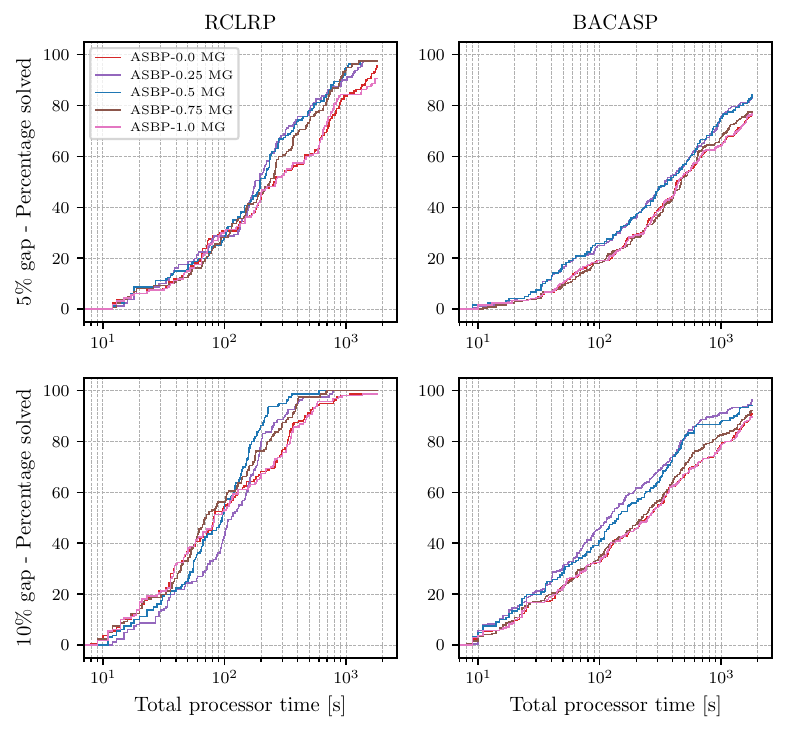}
\caption{Performance plots of our ASBP with different master-gap factors~$\mu$.}
\label{fig:mgapfactor}
\end{figure}

\revv{For the RCLRP, it can be seen that certain master-gap factors are good choices for easier instances (instances that are solved in less time), and some factors perform better for harder instances. For example, in the experiments with a target gap of 10\%, $\mu=0.25$ produces the worst results out of all tested master-gap factors for instances that solve within \SI{200}{s}, while this factor is the second best choice for instances that take more than \SI{200}{s} to solve. For the experiments with target gap 5\%, runs with $\mu=0.25$ and with $\mu=0.5$ perform similarly well. Overall, we identified $\mu=0.5$ to be the best master-gap factor choice, and choose it consistently for both applications and each setup of parameters (except experiments with target gap zero where the choice of the master-gap factor is irrelevant).}

\subsection{Comparison of different implementations of \Init() for the \linebreak RCLRP}\label{sec:appendix-init}
\rev{
As mentioned in Section~3 of the main paper, several ways to initialize the subset~$D$ of scenarios are possible. To compare different options, we ran additional ASBP variants using the following \Init() options:
\begin{itemize}
    \item \textbf{Empty:} $\Init()=\emptyset$.
    \item \textbf{Random:} $\Init()=\{s\}$, where $s \in S$ is a randomly chosen scenario.
    \item \textbf{Demand:} $\Init()=\underset{s\in S}{\operatorname{lexmax}} \left(\lvert \{j\in J:\beta^s_j > 0\} \rvert,\; \sum_{j\in J} \beta^s_j\right)$.
\end{itemize}
}

\rev{
The last option chooses the scenario with the maximum number of customers with non-zero demand, breaking ties by choosing the scenario with the highest total demand. This is motivated by the fact that, in the RCLRP, scenarios with more customers and higher total demand are likely to be more challenging.
}

\revv{The results are reported in Table~\ref{tab:init-table}. One can see that the choice of \Init() does not significantly affect the overall average runtime, which ranges between \SI{1946}{s} for the ``Empty'' option and \SI{1965}{s} for the ``Demand'' option.
As for the number of runs that were solved within the time limit, the ``Demand'' option is slightly better than the other two. Especially for some configurations of runs with 0\% target gap, using the ``Demand'' option, we were able to solve more instances than with the other two options (15.5 versus 15.0 and 15.1). For these, however, the average running times are very high and often close to the time limit anyways, which makes this result less significant with respect to the performance of this option.
Because of the similar performances, no clear winner could be identified, and we chose to use the ``Empty'' option in the main computational results in order to prove the concept of having the algorithm itself identify a first scenario to add to the subset $D$.}

\begin{center}
\begin{table}[H]
    \centering
    \begin{adjustbox}{width=\textwidth}
\newcolumntype{R}[1]{>{\raggedleft\arraybackslash}m{#1}}
\begin{tabular}{rrr|*{3}{R{18mm}}|*{3}{R{20mm}}}
\toprule
    &    &    &  \multicolumn{3}{c}{Average total runtime} & \multicolumn{3}{|c}{Number of solved runs within time limit} \\
Gap & Cust. & Sc. &  Empty &  Random &  Demand &  Empty &  Random &  Demand \\
\midrule
0.00 & 12 & 16 &    134 &     166 &        182 &   20 &   20 &    20 \\
     &    & 64 &    866 &     831 &        773 &   18 &   18 &    18 \\
     & 18 & 16 &   4385 &    3347 &       3474 &    8 &   10 &    10 \\
     &    & 64 &   6000 &    6000 &       5491 &    0 &    0 &     2 \\
     & 22 & 16 &   5232 &    5288 &       5294 &    4 &    4 &     4 \\
     &    & 64 &   6000 &    6000 &       6000 &    0 &    0 &     0 \\
     & 24 & 16 &   4808 &    5228 &       5506 &    7 &    6 &     6 \\
     &    & 64 &   5703 &    5725 &       5326 &    2 &    4 &     6 \\
\midrule
\multicolumn{3}{c|}{Average 0\% gap}     &   4141 &    4073 &       4006 &   7.4 &   7.8 &   8.2 \\
\midrule
0.05 & 12 & 16 &     53 &      81 &         82 &   20 &   20 &    20 \\
     &    & 64 &    187 &     189 &        137 &   20 &   20 &    20 \\
     & 18 & 16 &    289 &     426 &        790 &   20 &   20 &    19 \\
     &    & 64 &    419 &     923 &        472 &   20 &   20 &    20 \\
     & 22 & 16 &   1650 &    1284 &       1543 &   16 &   18 &    18 \\
     &    & 64 &   2134 &    2878 &        942 &   18 &   14 &    20 \\
     & 24 & 16 &   1627 &    1786 &       3128 &   18 &   16 &    14 \\
     &    & 64 &   2707 &    1604 &       2481 &   17 &   20 &    18 \\
\midrule
\multicolumn{3}{c|}{Average 5\% gap}    &   1133 &    1146 &       1197 &  18.6 &  18.5 &  18.6 \\
\midrule
0.10 & 12 & 16 &     27 &      24 &         39 &   20 &   20 &    20 \\
     &    & 64 &     81 &     101 &         77 &   20 &   20 &    20 \\
     & 18 & 16 &    124 &     214 &        219 &   20 &   20 &    20 \\
     &    & 64 &    199 &     521 &        352 &   20 &   20 &    20 \\
     & 22 & 16 &    825 &     660 &       1116 &   18 &   19 &    18 \\
     &    & 64 &   1318 &    2112 &        742 &   18 &   14 &    20 \\
     & 24 & 16 &    808 &     931 &       1748 &   18 &   20 &    18 \\
     &    & 64 &   1134 &     500 &       1237 &   18 &   20 &    20 \\
\midrule
\multicolumn{3}{c|}{Average 10\% gap}     &    565 &     633 &        691 &  19.0 &  19.1 &  19.5 \\
\midrule
\multicolumn{3}{c|}{Overall average} &   1946 &    1951 &       1965 &  15.0 &  15.1 &  15.5 \\
\bottomrule
\end{tabular}
\end{adjustbox}

    \caption{Comparison of \Init() variants for the ASBP (with $\mu=0.5$).
    Each provided average total runtime is the average over 10~instances and 2~repetitions per instance (i.e., 20~data points are used for each reported average). The time limit is set to 100~minutes. All times are in seconds.}
    \label{tab:init-table}
\end{table}
\end{center}

\subsection{Effect of disabling the heuristic}\label{sec:appendix-noheur}
\revv{
In Figure~\ref{fig:noheuristic}, we examine how the performance of the three considered methods changes when the heuristic is disabled. In the algorithms, this effectively leads to the heuristic phase being skipped, and all scenarios' upper bounds being initialized with~$+\infty$ (and lower bounds with~$-\infty$) before going into the first iteration of the master problem.}

\begin{figure}[H]
\includegraphics{./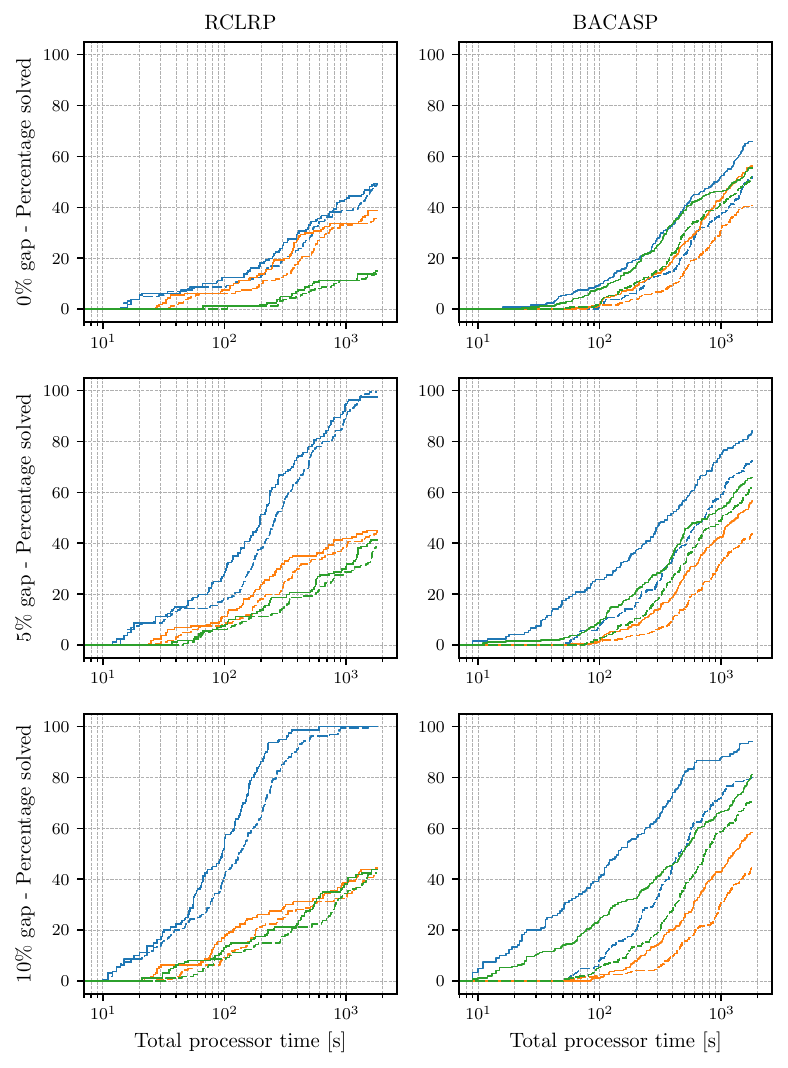}
\caption{\rev{Comparison of performance plots for different algorithms (ASBP in blue, ISAM in orange, SRP in green) shown once with the standard heuristic (solid lines) and once without heuristic (dashed lines). All other parameters are the same as for the small instances in Figures~1 and~2 of the main paper.}}
\label{fig:noheuristic}
\end{figure}

\revv{Throughout these results, it is apparent that the use of a heuristic does indeed speed up the overall solving process on average. This is true for each tested algorithm, each target gap, and each problem.
Recall that, for the RCLRP, we used a \SI{0.1}{s}-run of Gurobi as the heuristic while, for the BACASP, a specifically tailored combinatorial heuristic is used. This also shows in the results: for the RCLRP, the difference between the performance with and without the use of the heuristic is considerably smaller than for the BACASP. In the latter case, e.g., in the experiments with 10\% target gap, 94.2\% of the instances were solved within the time limit by our algorithm using the heuristic, and only 80\% were solved within the time limit without the use of the heuristic, which makes a difference of roughly 14\%.}

\fi

\end{document}